\newcommand{\SL}{\mathop{/}}
\newcommand{\BS}{\mathop{\backslash}}
\newcommand{\U}{\mathbf{1}}
\newcommand{\Morrill}{\mathbf{Db}\boldsymbol{!}_\mathbf{b}}
\newcommand{\MorrillB}{\mathbf{Db}\boldsymbol{!}}
\newcommand{\CC}{\boldsymbol{!}_\mathbf{b} \mathbf{L}^{\!\U}}
\newcommand{\CCbfp}{\boldsymbol{!} \mathbf{L}^{\!\U}}
\newcommand{\CCwk}{\boldsymbol{!}_\mathbf{w} \mathbf{L}^{\!\U}}
\newcommand{\LL}{\mathbf{L}^{\!\U}}
\newcommand{\BFP}[1]{\text{\sc bfp}(#1)}
\newcommand{\NMod}{[]^{-1}}
\newcommand{\PMod}{\langle\rangle}
\newcommand{\PERM}{\mathrm{perm}}
\newcommand{\CONTRb}{\mathrm{contr}_\mathbf{b}}
\newcommand{\CONTR}{\mathrm{contr}}
\newcommand{\WEAK}{\mathrm{weak}}
\newcommand{\SHOOT}{\mathrm{inst}}
\newcommand{\CUT}{\mathrm{cut}}
\newcommand{\Bc}{\mathcal{B}}
\newcommand{\Tc}{\mathcal{T}}
\newcommand{\Der}{\mathscr{D}}
\newcommand{\der}[2]{\begin{matrix} \mbox{\includegraphics{derivtree.pdf}} 
\vspace*{-5em}\\ #1 \\[3.5em] #2\end{matrix}}
\begin{document}

\title{Undecidability of the Lambek Calculus with 
Subexponential and Bracket Modalities}

\author{Max Kanovich\inst{1} \and
Stepan Kuznetsov\inst{2} \and Andre Scedrov\inst{3}}
\tocauthor{Max Kanovich, Stepan Kuznetsov, and Andre Scedrov}
\institute{%Queen Mary, University of London and\\
University College London and \\
National Research University Higher School of Economics (Moscow);\\
\email{m.kanovich@qmul.ac.uk}
\and
Steklov Mathematical Institute, RAS (Moscow);\\
\email{sk@mi.ras.ru}
\and
University of Pennsylvania (Philadelphia) and\\
National Research University Higher School of
Economics (Moscow); \\
\email{scedrov@math.upenn.edu}}

\maketitle

%\date{}
%\author{(Internal Draft)}

%\maketitle

\begin{abstract}
The Lambek calculus is a well-known logical formalism for modelling natural 
language syntax. The original calculus covered a substantial number 
of intricate natural language phenomena, but only those restricted to 
the context-free setting. In order to address more subtle linguistic issues, 
the Lambek calculus has been extended in various ways. In particular, 
Morrill and Valent\'{\i}n (2015) introduce an extension with so-called exponential 
and bracket modalities. Their extension is based on a non-standard contraction rule 
for the exponential that interacts with the bracket structure in an intricate way. 
The standard contraction rule is not admissible in this calculus.
In this paper we prove undecidability of the derivability problem in their calculus. 
We also investigate restricted decidable fragments considered by Morrill and 
Valentin and we show that these fragments belong to the NP class. 
\end{abstract}

\section{Linguistic Introduction}

The Lambek calculus~\cite{Lambek58} is a substructural, non-commutative logical system
(a variant of linear logic~\cite{Girard} in its intuitionistic non-commutative version~\cite{Abrusci})
that serves as the logical base for categorial grammars, a formalism that aims to describe
natural language by means of logical derivability (see Buszkowski~\cite{BuszkoTLiG}, 
Carpenter~\cite{Carpenter}, Morrill~\cite{MorrillBook}, Moot and Retor\'{e}~\cite{MootRetore}, {\it etc}). 
The idea of categorial grammar goes back to works of Ajdukiewicz~\cite{Ajdukiewicz} and Bar-Hillel~\cite{BarHillel},
and afterwards it developed into several closely related frameworks, including combinatory categorial grammars (CCG, Steedman~\cite{Steedman}),
categorial dependency grammars (CDG, Dikovsky and Dekhtyar~\cite{DikovskyDekhtyar}), and Lambek categorial grammars.
A categorial grammar assigns syntactic categories (types) to words of the language.
In the Lambek setting, types are constructed using two division operations, $\BS$ and $\SL$, and the product,~$\cdot$.
Intuitively, $A \BS B$ denotes the type of a syntactic object that lacks something of type $A$
on the left side to become an object of type $B$; $B \SL A$ is symmetric; the product stands
for concatenation. The Lambek calculus provides a system of rules for reasoning about syntactic
types. %On the other hand, the Lambek calculus can be viewed as a variant of intuitionistic
%non-commutative linear logic FIXME ref. to Abrusci etc %FIXME: move somewhere else!

For a quick example, consider the sentence {\sl ``John loves Mary.''} Let {\sl ``John''} and
{\sl ``Mary''} be of type $N$ (noun), and {\sl ``loves''} receive the type $(N \BS S) \SL N$ of
the transitive verb: it takes a noun from the left and a noun from the right, yielding a sentence, $S$.
This sentence is judged as a grammatical one, because $N, (N \BS S) \SL N, N \to S$ is a
theorem in the Lambek calculus (and even in the Ajdukiewicz -- Bar-Hillel logic for basic categorial grammars).

The Lambek calculus is capable of handling more complicated situations, including %. Standard examples include
dependent clauses: {\sl ``the girl whom John loves''}, parsed as $N$ using the following
types: $N \SL CN, CN, (CN \BS CN) \SL (S \SL N), N,  (N \BS S) \SL N \to N$ (here $CN$ stands
for ``common noun,'' a noun without an article), and coordination:
{\sl ``John loves Mary and Pete loves Kate,''} where {\sl ``and''} is $(S \BS S) \SL S$.
%linking two sentences into one

There are, however, even more sophisticated cases for which the pure Lambek calculus is known to be insufficient
(see, for example,~\cite{MorrillBook}\cite{MootRetore}).
On the one hand, for a noun phrase like {\sl ``the girl whom John met yesterday''} it is problematic
to find a correct type for {\sl ``whom,''} since the dependent clause {\sl ``John met yesterday''} expects the
lacking noun
({\sl ``John met ... yesterday''}; the ``...'' place is called {\em gap})
in the middle, and therefore is neither of type $S \SL N$ nor of type $N \BS S$. This phenomenon is
called {\em medial extraction.} On the other hand, the grammar sketched above generates,
for example, {\sl *``the girl whom John loves Mary and Pete loves.''} The asterisk indicates ungrammaticality---but
%that this example is not grammatical---but
%However, the incorrect ``dependent clause,''
 {\sl ``John loves Mary and Pete loves''} is yet of %given %parsed as a phrase of
  type $S \SL N$. % by the Lambek calculus. 
  To avoid this, one needs to block extraction
from certain syntactic structures ({\em e.g.,} compound sentences), called {\em islands}~\cite{Ross}\cite{MorrillBook}.
%FIXME? more refs on islands...

These issues can be addressed by extending the Lambek calculus with extra connectives (that allow
to derive more theorems) and also with a more sophisticated syntactic structure (that allows blocking
unwanted derivations). In the next section, we follow Morrill and Valent\'{\i}n~\cite{MorrillBook}\cite{MorVal}
and define an extension of the Lambek calculus with a subexponential modality (allows medial and also
so-called parasitic extraction) and brackets (for creating islands). %introduce local non-associativity that allows creating islands).

\section{Logical Introduction} %{The Calculus $\CC$}
\label{S:Calculus}

In order to block ungrammatical extractions, such as discussed above, Morrill~\cite{Morrill1992} and Moortgat~\cite{Moortgat1996} introduce
an extension of the Lambek calculus with brackets that create islands. For the second issue, medial extraction,
Morrill and Valent\'{\i}n~\cite{Barry}\cite{MorVal}  % FIXME: should be other refs
suggest using a  modality which they call ``exponential,'' in the spirit of Girard's exponential in linear logic~\cite{Girard}.
We rather use the term ``subexponential,'' which is due to Nigam and Miller~\cite{NigamMiller}, since this modality allows only some
of the structural rules (permutation and contraction, but not weakening). The difference from~\cite{NigamMiller},
however, is in the non-commutativity of the whole system %(Nigam and Miller consider subexponentials over commutative linear logic)
 and the non-standard nature of the contraction rule.

%In this paper 
We consider
$\CC$, the Lambek calculus with the unit constant~\cite{LambekUnit}, % (and therefore allowing empty antecedents~\cite{Lambek61},
%unlike the original version of the Lambek calculus~\cite{Lambek58}),
brackets, and a subexponential controlled by rules from~\cite{MorVal}. The calculus $\CC$ is a conservative
fragment of the $\Morrill$ system by Morrill and Valent\'{\i}n~\cite{MorVal}.

Due to brackets, the syntax of $\CC$ is more involved than the syntax of a standard sequent calculus. %As usually, 
Derivable objects are sequents of the form $\Pi \to A$. %In the original Lambek calculus, $\Pi$ is a
%linearly ordered sequence of types (formulae), $A$ is a type.  In $\CC$, 
The antecedent $\Pi$ is a structure called
{\em meta-formula} (or {\em configuration});
the succedent $A$ is a formula. Meta-formulae are built from formulae (types) using two
metasyntactic operators: comma and brackets. The succedent $A$ is a formula. Formulae, in their turn,
are built from {\em primitive types} (variables) $p_1, p_2, \ldots$ and the unit constant $\U$ 
using the Lambek's binary connectives: $\BS$, $\SL$, and $\cdot$, and three unary connectives, $\PMod$, $\NMod$,
and ${!}$. The first two unary connectives operate brackets; the last one is the subexponential used
for medial extraction.

Meta-formulae are denoted by capital Greek letters; $\Delta(\Gamma)$ stands for $\Delta$ with a designated occurrence
of a meta-formula (in particular, formula) $\Gamma$. Meta-formulae are allowed to be empty;
the empty meta-formula is denoted by $\Lambda$.

The axioms of $\CC$ are $A \to A$ and $\Lambda \to \U$, and the rules are as follows:
%The calculus $\CC$ has axioms of the form $A \to A$ and $\Lambda \to \U$, and the following rules:
%$$
%\infer{A \to A}{}
%\qquad
%\infer[(\U\to)]{\Delta(\U) \to A}{\Delta (\Lambda) \to A}
%\qquad
%$$
$$
\infer[(\SL\to)]{\Delta (C \SL B, \Gamma) \to D}
{\Gamma \to B & \Delta (C) \to D}
\qquad
\infer[(\to\SL)]{\Gamma \to C \SL B}{\Gamma, B \to C}
\qquad
\infer[(\cdot\to)]{\Delta (A \cdot B) \to D}
{\Delta (A, B) \to D}
$$
$$
\infer[(\BS\to)]{\Delta (\Gamma, A \BS C) \to D}
{\Gamma \to A & \Delta (C) \to D}
\qquad
\infer[(\to\BS)]{\Gamma \to A \BS C}{A, \Gamma \to C}
\qquad
\infer[(\to\cdot)]
{\Gamma_1, \Gamma_2 \to A \cdot B}
{\Gamma_1 \to A & \Gamma_2 \to B}
$$
$$
\infer[(\U\to)]{\Delta(\U) \to A}{\Delta (\Lambda) \to A}
\qquad
\infer[(\PMod\to)]
{\Delta(\PMod A) \to C}{\Delta([A]) \to C}
\quad
\infer[(\to\PMod)]
{[\Pi] \to \PMod A}{\Pi \to A}
$$  % FIXME: reorganise to save one line
$$
\infer[({!}\to)]{\Gamma({!}A) \to B}{\Gamma (A) \to B}
\qquad
\infer[(\NMod\!\to)]
{\Delta([\NMod A]) \to C}{\Delta(A) \to C}
\quad
\infer[(\to\NMod)]
{\Pi \to \NMod A}{[\Pi] \to A}
$$
$$
\infer[(\to{!})]{{!}A_1, \dots, {!}A_n \to {!}A}
{{!}A_1, \dots, {!}A_n \to A}
\qquad
\infer[(\CONTRb)]{\Delta ({!}A_1, \dots, {!}A_n, \Gamma ) \to B}
{\Delta ({!}A_1, \dots, {!}A_n, [{!}A_1, \dots, {!}A_n, \Gamma] ) \to B}
$$
$$
\infer[(\PERM_1)]{\Delta (\Gamma, {!}A) \to B}
{\Delta ({!}A, \Gamma) \to B}
\quad
\infer[(\PERM_2)]{\Delta ({!}A, \Gamma) \to B}
{\Delta (\Gamma, {!}A) \to B}
\quad
\infer[(\CUT)]{\Delta (\Pi) \to C}{\Pi \to A & \Delta(A) \to C}
$$

The permutation rules $(\PERM_{1,2})$ for ${!}$ allow medial extraction. The relative pronoun
{\sl ``whom''} now receives the type $(CN \BS CN) \SL (S \SL {!}N)$, and the noun phrase
{\sl ``the girl whom John met yesterday''} now becomes derivable (the type for
{\sl ``yesterday''} is $(N \BS S) \BS (N \BS S)$, modifier of verb phrase):
$$\scriptsize
\infer{N \SL CN, CN, (CN \BS CN) \SL (S \SL {!}N), N, (N \BS S) \SL N, (N \BS S) \BS (N \BS S) \to N}
{\infer{N, (N \BS S) \SL N, (N \BS S) \BS (N \BS S) \to S \SL {!} N}
{\infer{N, (N \BS S) \SL N, (N \BS S) \BS (N \BS S), {!}N \to S}
{\infer{N, (N \BS S) \SL N, {!}N, (N \BS S) \BS (N \BS S) \to S}
{\infer{N, (N \BS S) \SL N, N, (N \BS S) \BS (N \BS S) \to S}
{N \to N & \infer{N, N \BS S, (N \BS S) \BS (N \BS S) \to S}
{N \BS S \to N \BS S & \infer{N, N \BS S \to S}{N \to N & S \to S}}}
}}} &
\infer{N \SL CN, CN, CN \BS CN \to N}{\infer{CN, CN \BS CN \to CN}{CN \to CN & CN \to CN} & N \to N}}
$$
The permutation rule puts ${!}N$ to the correct place ({\sl ``John met ... yesterday''}).

For brackets, consider the following ungrammatical example: {\sl *``the book which John laughed without reading.''}
In the original Lambek calculus, it would be generated by the following derivable sequent:
\begin{center}
\scriptsize
$
N \SL CN, CN, (CN \BS CN) \SL (S \SL N), N, N \BS S, ((N \BS S) \BS (N \BS S)) \SL (N \BS S), (N \BS S) \SL N \to N.
$
\end{center}
In the grammar with brackets, however, {\sl ``without''} receives the syntactic type $\NMod ((N \BS S) \BS (N \BS S)) \SL (N \BS S)$,
making the {\sl without}-clause an island that cannot be penetrated by extraction. Thus, the following
sequent is not derivable
\begin{center}
\scriptsize
$
N \SL CN, CN, (CN \BS CN) \SL (S \SL N), N, N \BS S, [ \NMod((N \BS S) \BS (N \BS S)) \SL (N \BS S), (N \BS S) \SL N] \to N,
$
\end{center}
and the ungrammatical example gets ruled out.

Finally, the non-standard contraction rule, $(\CONTRb)$, that governs both ${!}$ and brackets, 
was designed for handling a more rare phenomenon called
{\em parasitic extraction.} It appears in examples like {\sl ``the paper that John signed without reading.''} Compare
with the ungrammatical example considered before: now in the dependent clause there are two gaps, and
one of them is inside an island
({\sl ``John signed ... [without reading ...]''}); both gaps are filled with the same ${!}N$: % using permutation and
%contraction:
$$\scriptsize
\infer{N \SL CN, CN, (CN \BS CN) \SL (S \SL {!}N), N, (N \BS S) \SL N, \NMod((N \BS S) \BS (N \BS S)) \SL (N \BS S), (N \BS S) \SL N \to N}
{\infer{N, (N \BS S) \SL N, \NMod((N \BS S) \BS (N \BS S)) \SL (N \BS S), (N \BS S) \SL N \to S \SL {!}N}
{\infer{N, (N \BS S) \SL N, \NMod((N \BS S) \BS (N \BS S)) \SL (N \BS S), (N \BS S) \SL N, {!}N \to S}
{\infer{N, (N \BS S) \SL N, {!}N, \NMod((N \BS S) \BS (N \BS S)) \SL (N \BS S), (N \BS S) \SL N \to S}
{\infer{N, (N \BS S) \SL N, {!}N, [ {!}N, \NMod((N \BS S) \BS (N \BS S)) \SL (N \BS S), (N \BS S) \SL N] \to S}
{\infer{N, (N \BS S) \SL N, {!}N, [ \NMod((N \BS S) \BS (N \BS S)) \SL (N \BS S), (N \BS S) \SL N, {!}N] \to S}
{\infer{N, (N \BS S) \SL N, N, [ \NMod((N \BS S) \BS (N \BS S)) \SL (N \BS S), (N \BS S) \SL N, N] \to S}
{N \to N &
\infer{N, N \BS S, [ \NMod((N \BS S) \BS (N \BS S)) \SL (N \BS S), (N \BS S) \SL N, N] \to S}
{N \to N & 
\infer{N, N \BS S, [ \NMod ((N \BS S) \BS (N \BS S)) \SL (N \BS S), N \BS S] \to S}
{N \BS S \to N \BS S & 
\infer{N, N \BS S, [ \NMod ((N \BS S) \BS (N \BS S)) ] \to S}
{\infer{N, N \BS S, (N \BS S) \BS (N \BS S) \to S}{N \BS S \to N \BS S & N, N \BS S \to S}}}
}}}}
}}}
&
N \SL CN, CN \to N
}
$$

% FIXME: type for that also has brackets!!

This construction allows potentially infinite recursion, nesting islands with parasitic extraction.
%{\sl ``''
%FIXME examples: recursion, ad infinitum
On the other hand, ungrammatical examples, like
{\sl *``the book that John gave to''} with two gaps outside
islands ({\sl ``John gave ... to ...''}) are not derived with $(\CONTRb)$, %this special
%contraction rule, 
but can be derived using the contraction rule in the standard, not bracket-aware form:
%$$
%\infer[(\CONTR)]{\Delta ({!}A) \to C}{\Delta ({!}A, {!}A) \to C}
%$$
$
\tfrac{\Delta({!}A, {!}A) \to C}{\Delta({!}A) \to C} (\CONTR)
$.

The system with $(\CONTR)$ instead of $(\CONTRb)$ is a conservative extension of its
fragment without brackets. In an earlier paper~\cite{KanKuzSce2} we show that the latter is undecidable.
For $(\CONTRb)$, however, in the bracket-free fragment there are only permutation rules
for ${!}$, and this fragment is decidable (in fact, it belongs to NP). Therefore, in contrast to~\cite{KanKuzSce2},
the undecidability proof in this paper (Section~\ref{S:undec}) crucially depends on
brackets. 
% FIXME: discuss more on the result in [13] (maybe in a sep. paragraph)
On the other hand, in~\cite{KanKuzSce2} we've also proved decidability of a fragment of
a calculus with~${!}$, but without brackets. In the calculus considered in this paper, $\CC$,
brackets control the number of $(\CONTRb)$ applications,
%On the other hand, brackets control the number of $(\CONTRb)$ applications,
whence we are now able to show membership in NP  for a different, broad fragment
of $\CC$ (Section~\ref{S:effective}), which includes brackets. %, compared to the decidability result of~\cite{KanKuzSce2}.

It can be easily seen that the calculus with bracket modalities but without ${!}$ also belongs to the NP class.
Moreover, as shown in~\cite{KanKuzMorSce}, there exists even a polynomial algorithm for
deriving formulae of bounded order (connective alternation and bracket nesting depth) in the calculus with brackets but
without~${!}$.
This algorithm uses proof nets, following the ideas of Pentus~\cite{Pentus2010}. As opposed to~\cite{KanKuzMorSce}, 
as we show here, in the presence of~${!}$ the derivability problem is undecidable.

In short,~\cite{KanKuzSce2} is about the calculus with~${!}$, but without brackets; \cite{KanKuzMorSce} is about
the calculus with brackets, but without~${!}$. This paper is about the calculus with both ${!}$ and brackets,
interacting with each other, governed by %the non-standard rule 
$(\CONTRb)$.
%Notice the strange contraction rule here. 

%FIXME put this also to the conclusion

The rest of this paper is organised as follows.
%In Section~\ref{S:Calculus} we introduce the calculus considered in this paper and show,
%following Morrill~\cite{Morrill2011}, how it is used to handle linguistic phenomena listed above.
In Section~\ref{S:cutelim} we formulate the cut elimination theorem for $\CC$
and sketch the proof strategy; the detailed proof is placed in Appendix~I.
In Section~\ref{S:nobracket} we define two intermediate calculi used
in our undecidability proof.
%$\CCbfp$ and $\CCwk$, with ${!}$ and without brackets and bracket modalities,
%and show their relation to $\CC$ and to $\LL$, the pure Lambek calculus with the unit constant.
In Section~\ref{S:undec} we prove the main result of this paper---the fact
that $\CC$ is undecidable. % (as a side-effect, we also get undecidability
%for $\CCbfp$ and $\CCwk$, which was known before).
 This solves an open question posed
by Morrill and Valent\'{\i}n~\cite{MorVal} (the other open question from~\cite{MorVal},
undecidability for the case without brackets, is solved in our previous paper~\cite{KanKuzSce2}). In Section~\ref{S:effective} we
consider a practically interesting fragment of $\CC$ for which Morrill and Valent\'{\i}n~\cite{MorVal}
present an exponential time algorithm and strengthen their result by proving an NP upper
bound for the derivability problems in this fragments. %Finally, in Section~\ref{S:future}
%we discuss directions of future research.
Section~\ref{S:future} is for conclusion and future research.
%cut-elimination (stated in~\cite{MorVal2015} without detailed proof).

%... intermediate calculi: $\CCbfp$, $\CCwk$

\section{Cut Elimination in $\CC$}\label{S:cutelim}

%In this section we prove the cut elimination theorem for~$\CC$:

% FIXME move the theorem to the end... w/ ref to Appendix
% FIXME still too much material here
Cut elimination is a natural property that one expects a decent logical system to have.
For example, cut elimination entails the {\em subformula property:} each formula that
appears somewhere in the cut-free derivation is a subformula of the goal sequent.
(Note that for meta-formulae this doesn't hold, since brackets get removed by
applications of some rules, namely, $(\PMod\to)$, $(\to\NMod)$, and $(\CONTRb)$.)

Theorem~\ref{Th:cutelim} is claimed in~\cite{MorVal}, but without a detailed proof.
In this section we give a sketch of the proof strategy; the complete proof is in Appendix~I.

For the original Lambek calculus cut elimination was shown by Lambek~\cite{Lambek58} and 
goes straightforwardly by induction; Moortgat~\cite{Moortgat1996} extended Lambek's proof to
the Lambek calculus with brackets (but without ${!}$). It is well-known, however, that in the 
presence of a contraction rule direct induction doesn't work. Therefore, one needs to use more 
sophisticated cut elimination strategies.

The standard strategy, going back to Gentzen's {\em Hauptsatz}~\cite{Gentzen35},
replaces the cut {\em (Schnitt)} rule with a more general rule called mix
{\em (Mischung).} Mix is a combination of
cut and contraction, and this more general rule can be eliminated by straightforward
induction. For linear logic with the exponential obeying standard rules,
cut elimination is due to Girard~\cite{Girard}; a detailed exposition of the cut elimination procedure using mix is
presented in~\cite[Appendix~A]{LMSS92}.

For $\CC$, however, due to the subtle nature of the contraction rule, 
$(\CONTRb)$, formulating the mix rule is problematic. Therefore, here we follow
another strategy, ``deep cut elimination'' by Bra\"uner and de Paiva~\cite{BraunerPaivaBRICS}\cite{BraunerPaivaCSL};
similar ideas are also used in~\cite{BraunerS5} and~\cite{Eades}. % FIXME other refs %to their research.
As usually, we eliminate one cut, and then proceed by
induction. % on the number of cuts.

\begin{lemma}\label{Lm:cutelim}
Let $\Delta (\Pi) \to C$ be derived from $\Pi\to A$ and $\Delta(A) \to C$ using the cut rule,
%
%$$
%\infer[(\CUT)]{\Delta (Pi) \to C}
%{\Pi \to A & \Delta (A) \to C}
%$$
%$$
%\infer[(\CUT)]
%{\Delta \langle \Pi \rangle \to C}
%{\der{\Der_{\mathrm{left}}}{\Pi \to A} & \der{\Der_{\mathrm{right}}}{\Delta \langle A \rangle \to C}}
%$$
and $\Pi \to A$ and $\Delta(A) \to C$ have cut-free derivations $\Der_{\mathrm{left}}$ and $\Der_{\mathrm{right}}$.
Then
$\Delta (\Pi) \to C$ also has a cut-free derivation.
\end{lemma}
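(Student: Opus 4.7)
The plan is to follow Bräuner and de Paiva's ``deep cut elimination'' strategy, as the authors have indicated. I would proceed by a nested induction: the primary parameter is the complexity of the cut formula $A$, and the secondary parameter is $|\Der_{\mathrm{left}}|+|\Der_{\mathrm{right}}|$. When $A$ is not of the form ${!}B$, the proof goes through in the standard Gentzen style. If the last rule of either derivation does not have $A$ as its principal formula, the cut is pushed upward into its premises, shrinking the secondary parameter; the structural subtleties introduced by brackets and by the meta-formula notation $\Delta(\cdot)$ are handled uniformly, since the non-logical rules operate on designated sub-meta-formulae which the cut can simply pass through. If both last rules are principal on $A$ (for instance $A = C \SL B$ introduced by $(\to\SL)$ on the left and $(\SL\to)$ on the right), one decomposes the cut into cuts on the immediate subformulae of $A$, each dispatched by the primary induction.

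The delicate case is when the cut formula has the form ${!}B$, because the right derivation may freely apply $(\CONTRb)$, $({!}\to)$, or the permutation rules to this occurrence. Following Bräuner--de Paiva, I would strengthen the statement in this case to a \emph{deep cut} lemma: if $\Pi \to {!}B$ is derived by $(\to{!})$ from a premise ${!}A_1, \dots, {!}A_n \to B$, then for any cut-free derivation of $\Delta({!}B, \dots, {!}B) \to C$ in which several occurrences of ${!}B$ are marked, one can construct a cut-free derivation of $\Delta(\Pi, \dots, \Pi) \to C$ in which all marked occurrences have been simultaneously replaced. The deep statement is proved by recursion on the right derivation: permutation and $(\CONTRb)$ acting on a marked ${!}B$ become, after substitution, the very same rules acting on the ${!}A_i$'s inside $\Pi$; an application of $({!}\to)$ on a marked occurrence unlocks an ordinary cut on the smaller formula $B$, which is dispatched by the primary induction.

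The principal obstacle is the coherence of this substitution with the bracket structure. When $(\CONTRb)$ copies a marked ${!}B$ into a newly bracketed region, the substituted meta-formula $\Pi = {!}A_1, \dots, {!}A_n$ must also be duplicable into that region, and its permutations past intermediate material must remain available. The key observation saving the argument is precisely the restriction on the premise of $(\to{!})$: it forces $\Pi$ to consist entirely of banged formulae, so $(\CONTRb)$ and $(\PERM_{1,2})$ apply to $\Pi$ verbatim and the substitution respects the bracket discipline throughout. Without this restriction the procedure would collapse; with it, the deep induction terminates and yields Lemma~\ref{Lm:cutelim} as the special case in which exactly one occurrence of ${!}B$ is marked.
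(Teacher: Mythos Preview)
Your proposal is correct and follows essentially the same ``deep cut elimination'' strategy as the paper: the same nested induction on $\kappa$ and $\sigma$, the same case split, and the same key observation that $\Pi$ consists entirely of ${!}$-formulae so that $(\CONTRb)$ and $(\PERM_{1,2})$ transfer to the substituted meta-formula. The only cosmetic difference is that you package the ${!}$-case as a separate strengthened lemma with several simultaneously marked occurrences of ${!}B$, whereas the paper phrases it as tracing the single designated occurrence of ${!}A$ upward through $\Der_{\mathrm{right}}$ (branching at $(\CONTRb)$) and performing one global replacement; these are two presentations of the same argument.
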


We proceed by nested induction on two parameters: (1) the complexity $\kappa$ of the formula $A$ being cut;
(2) the total number $\sigma$ of rule applications in $\Der_\mathrm{left}$ and $\Der_{\mathrm{right}}$. 
Induction goes smoothly for all cases, except the case where the last rule in $\Der_\mathrm{left}$ is
$(\to{!})$ and the last rule in $\Der_{\mathrm{right}}$ is $(\CONTRb)$:
$$
\footnotesize
\infer[(\CUT)]{\Delta({!}\Phi_1, {!}\Pi, {!}\Phi_2, \Gamma) \to C}
{\infer[(\to {!})]{{!}\Pi \to {!}A}{{!}\Pi \to A} &
\infer[(\CONTRb)]{\Delta({!}\Phi_1, {!}A, {!}\Phi_2, \Gamma) \to C}
{\Delta({!}\Phi_1, {!}A, {!}\Phi_2, [{!}\Phi_1, {!}A, {!}\Phi_2, \Gamma]) \to C}}
$$
(Here ${!}\Phi$ stands for ${!}F_1, \dots, {!}F_m$, if $\Phi = F_1, \dots, F_m$.) The na\"{\i}ve attempt, % to propagate $(\CUT)$,
$$
\footnotesize
\infer[(\CONTRb)]{\Delta({!}\Phi_1, {!}\Pi, {!}\Phi_2, \Gamma) \to C}
{\infer[(\CUT)]{\Delta ({!}\Phi_1, {!}\Pi, {!}\Phi_2, [{!}\Phi_1, {!}\Pi, {!}\Phi_2, \Gamma]) \to C}
{{!}\Pi \to {!}A & \infer[(\CUT)]{\Delta ({!}\Phi_1, {!}A, {!}\Phi_2, [{!}\Phi_1, {!}A, {!}\Phi_2, \Gamma]) \to C}
{{!}\Pi \to {!}A & \Delta({!}\Phi_1, {!}A, {!}\Phi_2, [{!}\Phi_1, {!}A, {!}\Phi_2, \Gamma]) \to C}}}
$$
fails, since for the lower $(\CUT)$ the $\kappa$ parameter is the same, and $\sigma$ is uncontrolled.
% (we don't know
%the size of the proof that comes out after eliminating the upper $(\CUT)$).
Instead of that, the ``deep'' cut elimination strategy goes inside $\Der_{\mathrm{right}}$ and traces the active ${!}A$ occurrences
up to the applications of $({!}\to)$ which introduced them. Instead of these applications we put $(\CUT)$ with the left
premise, ${!}\Pi \to A$, and replace ${!}A$ with ${!}\Pi$ down the traces. The new $(\CUT)$ instances have a smaller $\kappa$
parameter ($A$ is simpler than ${!}A$) and can be eliminated by induction.

\begin{theorem}\label{Th:cutelim}
Every sequent derivable in $\CC$ has a derivation without $(\CUT)$. %that doesn't use the $(\CUT)$ rule.
\end{theorem}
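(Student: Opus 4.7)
The plan is to bootstrap Lemma~\ref{Lm:cutelim} by an outer induction on the number of (\CUT) applications occurring in an arbitrary $\CC$-derivation $\Der$ of a given sequent $\Pi \to A$. The base case is immediate: if $\Der$ contains no cuts, we are done. For the inductive step, assume $\Der$ contains at least one application of (\CUT), and let $n \ge 1$ be the total number of such applications.

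I would then select a \emph{topmost} occurrence of (\CUT) in $\Der$, that is, an instance
\[
\infer[(\CUT)]{\Delta(\Pi') \to C}{\Pi' \to B & \Delta(B) \to C}
\]
such that neither subderivation above it contains any further (\CUT) applications. Such an occurrence exists because the set of cut applications in $\Der$ is finite and nonempty, so some minimal element with respect to the subderivation ordering must be at the top. The two premises $\Pi' \to B$ and $\Delta(B) \to C$ are then derived by subderivations $\Der_{\mathrm{left}}$ and $\Der_{\mathrm{right}}$ that are already cut-free. Thus Lemma~\ref{Lm:cutelim} applies and produces a cut-free derivation of $\Delta(\Pi') \to C$. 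Splicing this cut-free derivation into $\Der$ in place of the chosen (\CUT) inference yields a new derivation $\Der'$ of the original endsequent whose total number of (\CUT) applications is $n-1$. By the outer induction hypothesis, $\Der'$ can be transformed into a completely cut-free derivation, which completes the proof.

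The main obstacle has already been absorbed into Lemma~\ref{Lm:cutelim}: the delicate interaction of $(\to{!})$ and $(\CONTRb)$, which required the deep cut elimination strategy rather than Gentzen's mix rule, lives entirely inside that lemma. At the level of the theorem itself, the only subtlety is to be sure that eliminating a topmost cut via the lemma does not secretly introduce new (\CUT) instances elsewhere in $\Der$: this is guaranteed because Lemma~\ref{Lm:cutelim} produces a cut-free derivation of the conclusion, and we leave the rest of $\Der$, including any other cut applications below or parallel to the chosen one, strictly untouched. Hence the outer induction parameter strictly decreases and the argument terminates.
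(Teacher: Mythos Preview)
Your proposal is correct and follows exactly the approach the paper indicates: the paper writes ``As usually, we eliminate one cut, and then proceed by induction,'' and you have simply spelled out that outer induction on the number of $(\CUT)$ applications, selecting a topmost cut so that Lemma~\ref{Lm:cutelim} applies directly. The paper gives no further detail on the passage from the lemma to the theorem, so your write-up is if anything more explicit than the original.
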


%The accurate induction proof can be found in the Appendix.

%Theorem~\ref{Th:cutelim} easily follows from this lemma.

%FIXME: move detailed proof to Appendix

\section{Calculi Without Brackets: $\CCbfp$, $\CCwk$, $\LL$}\label{S:nobracket}

In this section we consider more traditional versions of the Lambek calculus
with ${!}$ that don't include bracket modalities. This is needed as a
technical step in our undecidability proof (Section~\ref{S:undec}).
Types (formulae) of these calculi are built from primitive types
using Lambek's connectives, $\BS$, $\SL$, and $\cdot$, and the
subexponential, ${!}$. Unlike in $\CC$, meta-formulae now are merely linearly ordered sequences of formulae
(possibly empty),
and we can write $\Delta_1, \Pi, \Delta_2$ instead of $\Delta (\Pi)$.

First we define the calculus $\CCbfp$. It includes the standard axioms and rules for  Lambek connectives
and the unit constant---see the rules of $\CC$ in Section~\ref{S:Calculus}.
  For the subexponential modality, ${!}$, introduction rules, $({!}\to)$ and $(\to{!})$, and permutation rules
  are also the same as in $\CC$, with the natural modification due to a simpler antecedent syntax.
  %(sequences of formulae instead of nested configurations with brackets). 
  The contraction
  rule, however, is significantly different, since now it is not controlled by brackets:
$$  \infer[(\CONTR)]{\Delta_1, {!}A, \Delta_2 \to B}
{\Delta_1, {!}A, {!}A, \Delta_2 \to B}
$$

\iffalse
Its axioms 
are $A \to A$ and $\Lambda \to \U$, and rules of inference are as follows:
%$$
%\infer{A \to A}{}
%\qquad
%\qquad
%\infer[(\to\U)]{\Lambda \to \U}{}
%$$
$$
\infer[(\SL\to)]{\Delta_1, C \SL B, \Gamma, \Delta_2 \to D}
{\Gamma \to B & \Delta_1, C, \Delta_2 \to D}
\quad
\infer[(\to\SL)]{\Gamma \to C \SL B}{\Gamma, B \to C}
\quad
\infer[(\cdot\to)]{\Delta_1, A \cdot B, \Delta_2 \to D}
{\Delta_1, A, B, \Delta_2 \to D}
$$
$$
\infer[(\BS\to)]{\Delta_1, \Gamma, A \BS C, \Delta_2 \to D}
{\Gamma \to A & \Delta_1, C, \Delta_2 \to D}
\quad
\infer[(\to\BS)]{\Gamma \to A \BS C}{A, \Gamma \to C}
\quad
\infer[(\to\cdot)]
{\Gamma_1, \Gamma_2 \to A \cdot B}
{\Gamma_1 \to A & \Gamma_2 \to B}
$$
$$
\infer[({!}\to)]{\Gamma_1, {!}A, \Gamma_2 \to B}{\Gamma_1, A, \Gamma_2 \to B}
\quad
\infer[(\PERM_1)]{\Delta_1, \Gamma, {!}A, \Delta_2 \to B}
{\Delta_1, {!}A, \Gamma, \Delta_2 \to B}
\quad
\infer[(\PERM_2)]{\Delta_1, {!}A, \Gamma, \Delta_2 \to B}
{\Delta_1, \Gamma, {!}A, \Delta_2 \to B}
$$
$$
\infer[(\U\to)]{\Delta_1, \U, \Delta_2 \to A}{\Delta_1, \Delta_2 \to A}
\quad
\infer[(\to{!})]{{!}A_1, \dots, {!}A_n \to {!}A}
{{!}A_1, \dots, {!}A_n \to A}
\quad
\infer[(\CONTR)]{\Delta_1, {!}A, \Delta_2 \to B}
{\Delta_1, {!}A, {!}A, \Delta_2 \to B}
$$
$$
\infer[(\CUT)]{\Delta_1, \Pi, \Delta_2 \to C}{\Pi \to A & \Delta_1, A, \Delta_2 \to C}
$$

%One can easily notice that 
Here all rules, except contraction, exactly correspond to the
rules of $\CC$; $(\CONTR)$, however, as opposed to $(\CONTRb)$, is not controlled by brackets.
\fi

The full set of axioms and rules of $\CCbfp$ is presented in Appendix~II.

This calculus $\CCbfp$ is a conservative fragment of $\MorrillB$, also 
by Morrill and Valent\'{\i}n~\cite{MorVal}. This system could also be used for modelling
medial and parasitic extraction, but is not as fine-grained as the bracketed system, being able
to derive ungrammatical examples like {\sl *``the paper that John sent to''} (see Section~\ref{S:Calculus}).

%\subsection{Bracket-Forgetting Projection}

In order to construct a mapping of $\CC$ into $\CCbfp$,
we define the {\em bracket-forgetting projection (BFP)} of formulae and
meta-formulae that removes all brackets and bracket modalities
($\NMod$ and $\PMod$). The BFP of a formula is again a formula,
but in the language without $\NMod$ and $\PMod$; the BFP of a 
meta-formula is a sequence of formulae. The following lemma is proved by
induction on derivation.
%\begin{center}
%$
%\begin{aligned}
%&\BFP{p} = p \text{ for every variable $p$;} && \BFP{\U} = \U;\\
%&\BFP{A \BS C} = \BFP{A} \BS \BFP{C}; && \BFP{C \SL B} = \BFP{C} \SL \BFP{B};\\
%&\BFP{A \cdot B} = \BFP{A} \cdot \BFP{B}; && \BFP{{!}A} = {!}\,\BFP{A};\\
%& \BFP{\NMod A} = \BFP{\PMod A} = A; && \BFP{[\Delta]} = \BFP{\Delta};
%\end{aligned}$\\
%$\BFP{\Gamma_1, \dots, \Gamma_n} = \BFP{\Gamma_1}, \dots, \BFP{\Gamma_n}.$
%\end{center}

\begin{lemma}\label{Lm:BFP}
If $\CC \vdash \Delta \to C$, then
$\CCbfp \vdash \BFP{\Delta} \to \BFP{C}$.
\end{lemma}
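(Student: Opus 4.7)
The plan is to proceed by straightforward induction on a derivation of $\Delta \to C$ in $\CC$. I would not need cut elimination here (since $\CCbfp$ also has cut), but the induction goes through on any derivation. The guiding observation is that $\BFP{\cdot}$ commutes with all formula-level connectives of $\CC$ other than $\PMod$ and $\NMod$, and flattens every bracketed sub-configuration $[\Pi]$ to its contents $\BFP{\Pi}$ inside the linearly-ordered antecedent. So for each rule of $\CC$ I only need to exhibit a simulating inference (possibly trivial) in $\CCbfp$.

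First I would dispense with the easy cases. Both axioms, $A \to A$ and $\Lambda \to \U$, project to axioms of $\CCbfp$. The Lambek connective rules $(\SL\to)$, $(\to\SL)$, $(\BS\to)$, $(\to\BS)$, $(\cdot\to)$, $(\to\cdot)$, the rule $(\U\to)$, the subexponential introduction rules $({!}\to)$ and $(\to{!})$, the permutation rules $(\PERM_1)$ and $(\PERM_2)$, and $(\CUT)$ all have identical counterparts in $\CCbfp$; since BFP commutes with their action on antecedents (a hole in $\Delta(\cdot)$ sits at a well-defined position of the linear projection $\BFP{\Delta}(\cdot)$) and with their action on succedents, the projected image of each such step is directly a valid $\CCbfp$ inference.

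The four bracket-modality rules are the most striking cases, and they become outright \emph{trivial} under BFP. For example $(\to\PMod)$ takes $\Pi \to A$ to $[\Pi] \to \PMod A$, but $\BFP{[\Pi]} = \BFP{\Pi}$ and $\BFP{\PMod A} = \BFP{A}$, so premise and conclusion project to the \emph{same} $\CCbfp$-sequent and no inference is needed. The same collapse happens for $(\PMod\to)$, $(\to\NMod)$, and $(\NMod\to)$, each of which merely inserts or removes a bracket pair together with a bracket modality, both of which vanish under $\BFP{\cdot}$.

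The one genuinely non-trivial case, and the main obstacle, is the contraction rule $(\CONTRb)$. Its premise projects to a configuration containing two adjacent copies of the block ${!}\BFP{A_1}, \ldots, {!}\BFP{A_n}$ (the surrounding brackets having been erased), followed by $\BFP{\Gamma}$, whereas its conclusion projects to the same configuration with only one copy. I would simulate this in $\CCbfp$ by first using the permutation rules $(\PERM_1)$ and $(\PERM_2)$ to slide the exponential formulas past one another and bring each matching pair ${!}\BFP{A_i}, {!}\BFP{A_i}$ into adjacency, and then applying the standard contraction rule $(\CONTR)$ to each such pair, $n$ times in succession. After a final round of permutations to restore the original left-to-right order of the ${!}\BFP{A_i}$'s, I obtain exactly $\BFP{\Delta(\mathbf{!}A_1, \ldots, \mathbf{!}A_n, \Gamma)} \to \BFP{B}$, closing the induction.
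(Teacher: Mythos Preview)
Your proposal is correct and follows exactly the approach the paper indicates: the paper simply states that the lemma ``is proved by induction on derivation'' without further detail, and what you have written is a faithful elaboration of that induction, including the only nontrivial case $(\CONTRb)$, which is indeed simulated in $\CCbfp$ by permutations followed by $n$ applications of the ordinary $(\CONTR)$ rule.
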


%\begin{proof}
%Induction on derivation.
%\end{proof}

Note that the opposite implication doesn't hold, {\em i.e.,} this mapping is not conservative.
Also, $\CCbfp$ is not a conservative fragment of $\CC$:
in the fragment of $\CC$ without brackets contraction is not admissible.
%, since contraction rules
%differ (in the elementary fragment of $\CC$ without brackets and bracket modalities, no
%contractions are possible).

The second calculus is $\CCwk$, obtained from $\CCbfp$ by adding  weakening  for ${!}$:
$$
\infer[(\WEAK)]{\Delta_1, {!}A, \Delta_2 \to C}{\Delta_1, \Delta_2 \to C}
$$
In $\CCwk$, the ${!}$ connective is equipped with a full set of structural rules (permutation,
contraction, and weakening), {\em i.e.,} it is %in other words, it is actually 
the exponential %connective 
of
linear logic~\cite{Girard}.

The cut rule in $\CCbfp$ and $\CCwk$ can be eliminated by the same ``deep'' strategy
as for $\CC$. On the other hand, since the contraction rule
in these calculi is standard, one can also use the traditional way with mix, like 
in~\cite[Appendix~A]{LMSS92}.

Finally, if we remove ${!}$ with all its rules, we get the Lambek calculus with the unit constant~\cite{LambekUnit}.
We denote it by $\LL$.

%\subsection*{Main Lemma}
%
%If $\Rc$ is a set of Buskowski's rules, let
%$$
%\Gc_\Rc = 
%\left\{ (r \SL q) \SL p \mid
%\frac{\Pi_1 \to p \quad \Pi_2 \to q}{\Pi_1, \Pi_2 \to r} \in \Rc \right\}
%\cup
%\left\{ r \SL (p \SL q) \mid
%\frac{\Pi, q \to p}{\Pi,\to r} \in \Rc \right\}.
%$$

%If $\Bc = \{ B_1, \dots, B_n \}$ is a finite set of formulae,
%let $$\Gamma_\Bc = {!}B_1, \dots, {!}B_n,$$ 
%$$\widetilde{\Gamma}_\Bc =
%{!}\,\antibox B_1, \dots, {!}\,\antibox B_n,$$
%$$\Phi_\Bc = {!}(\U \SL ({!}B_1)), \dots, {!}(\U \SL ({!} B_n)), \text{ and}$$
%$$\widetilde{\Phi}_\Bc = {!}(\U \SL ({!}\, \antibox B_1)), \dots, {!}(\U \SL ({!}\, \antibox B_n)).$$

%Note that $\Gamma_\Bc = \BFP{\widetilde{\Gamma}_\Bc}$ and
%$\Phi_\Bc = \BFP{\widetilde{\Phi}_\Bc}$.

%\subsection*{Main Trick}

\section{Undecidability of $\CC$}\label{S:undec}

The main result of this paper is:

\begin{theorem}\label{Th:main}
The derivability problem for $\CC$ is
undecidable.
\end{theorem}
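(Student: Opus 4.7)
The plan is to reduce the derivability problem in the bracket-free calculus $\CCbfp$ to that in $\CC$, leveraging the undecidability of $\CCbfp$ established in~\cite{KanKuzSce2}. Concretely, I would design an effective translation $(\Pi \to C) \mapsto (\Pi^\dagger \to C^\dagger)$ from sequents of $\CCbfp$ into sequents of $\CC$ and prove
\[
\CCbfp \vdash \Pi \to C \quad \Longleftrightarrow \quad \CC \vdash \Pi^\dagger \to C^\dagger.
\]
The translation will be arranged so that the bracket-forgetting projection acts as the identity on it: $\BFP{\Pi^\dagger} = \Pi$ and $\BFP{C^\dagger} = C$. Under this invariant the $(\Leftarrow)$ direction is immediate from Lemma~\ref{Lm:BFP} (after invoking cut-elimination, Theorem~\ref{Th:cutelim}, if one wants to work with cut-free $\CC$-proofs).

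The $(\Rightarrow)$ direction I would prove by induction on a $\CCbfp$-derivation. Each Lambek connective rule, the unit rules, the introduction rules for $!$, and the permutations $(\PERM_{1,2})$ have direct $\CC$-counterparts under a sufficiently local translation. The crucial case is simulating the standard contraction $(\CONTR)$: from a translated premise $\Delta_1^\dagger, !A^\dagger, !A^\dagger, \Delta_2^\dagger \to B^\dagger$ (derivable in $\CC$ by induction) one must obtain $\Delta_1^\dagger, !A^\dagger, \Delta_2^\dagger \to B^\dagger$. The strategy is to engineer the translation so that an application of $(\CONTRb)$, read bottom-up, yields the target sequent from one of the form $\Delta_1^\dagger, !A^\dagger, [!A^\dagger, \Delta_2^\dagger] \to B^\dagger$ whose derivability follows from the inductive premise: the bracket freshly wrapped around the trailing context $\Delta_2^\dagger$ is to be absorbed via the bracket modalities $\NMod$ and $\PMod$ already present in the translated formulas, using $(\NMod\to)$ and $(\PMod\to)$. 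A natural candidate is to encode each $!A$ as $!\NMod A^\dagger$ and to pre-bracket the antecedent layer by layer so that bracket skeletons line up with the ones created by $(\CONTRb)$.

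The main obstacle, and where the bulk of the work will sit, is choosing the translation so that the fresh bracket introduced by $(\CONTRb)$ always has a matching bracket-modality structure to swallow it, \emph{uniformly} across all contexts and all inductive cases of the connective rules---in particular those that split the antecedent, such as $(\to\cdot)$ and the left rules for $\BS$ and $\SL$, where the ``outer'' $!A_i$ pattern needed by $(\CONTRb)$ may be scattered between the two premises. Maintaining the BFP-identity invariant while ensuring simultaneous compatibility with $(\CONTRb)$ may require additional pre-processing of the source sequent (e.g.\ fixing $C^\dagger$ to be of the form $\PMod C'$ so that the global bracket structure of the antecedent can be peeled off at the root by $(\to\PMod)$). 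Once this combinatorial bookkeeping is in place, the reduction, together with Theorem~\ref{Th:cutelim} and Lemma~\ref{Lm:BFP}, transports the undecidability of $\CCbfp$ into $\CC$.
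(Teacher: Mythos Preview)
Your plan differs from the paper's, and the point you yourself flag as ``the main obstacle'' is a genuine gap, not just bookkeeping. The candidate translation ${!}A \mapsto {!}\,\NMod A^{\dagger}$ does \emph{not} simulate $(\CONTR)$. Running $(\CONTRb)$ with empty $\Gamma$, then $({!}\to)$, then $(\NMod\!\to)$ yields only
\[
\infer{\Delta_1^{\dagger},\, {!}\,\NMod A^{\dagger},\, \Delta_2^{\dagger} \to B^{\dagger}}
{\Delta_1^{\dagger},\, {!}\,\NMod A^{\dagger},\, A^{\dagger},\, \Delta_2^{\dagger} \to B^{\dagger}}
\]
which is precisely the paper's $(\SHOOT)$ rule (Lemma~\ref{Lm:shoot}). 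Your induction hypothesis, however, hands you a premise whose second copy is ${!}\,\NMod A^{\dagger}$, not $A^{\dagger}$; there is no admissible passage from the former premise to the latter, because on the left ${!}\,\NMod X$ is strictly more permissive than $X$. Nesting more $\NMod$'s (e.g.\ ${!}A \mapsto {!}\,\NMod\,{!}\,\NMod A^{\dagger}$) only pushes the mismatch one level deeper, and decorating the succedent with $\PMod$ does nothing for a bracket produced mid-antecedent by $(\CONTRb)$. A further symptom: under ${!}A \mapsto {!}\,\NMod A^{\dagger}$ the rule $(\to{!})$ fails to go through, since $(\to\NMod)$ would demand the translated antecedent be globally bracketed, which your induction hypothesis does not provide.

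The paper avoids all of this by \emph{not} attempting a uniform translation of $\CCbfp$ into $\CC$. It encodes the semi-Thue system directly in $\CC$: applying a production $B_i$ only requires placing a fresh copy of $B_i$ (not of ${!}\,\NMod B_i$), and that is exactly what $(\SHOOT)$ delivers. The leftover ${!}\,\NMod B_i$'s are absorbed in the base case by auxiliary formulae ${!}(\U \SL {!}\,\NMod B_i)$ collected in $\widetilde{\Phi}_G$. The easy direction is then the BFP (Lemma~\ref{Lm:BFP}), and the cycle is closed through $\CCwk$ and $\LL+\Tc_G$ back to the grammar. In particular the paper never simulates an arbitrary $(\CONTR)$ inside $\CC$; it gets undecidability of $\CCbfp$ as a by-product rather than using it as an input.
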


As a by-product of our proof we also obtain undecidability of $\CCbfp$, which was
proved in~\cite{KanKuzSce2} by a different method. We also obtain undecidability
of $\CCwk$,
%The new result of this paper is undecidability of $\CC$. Undecidability of
%$\CCbfp$ is proved in~\cite{KanKuzSce2}. For $\CCwk$, 
which also follows from
the results of~\cite{LMSS92}, as shown in~\cite{Kanazawa} and~\cite{deGroote}. % FIXME: refer to
%de Groote! 
%Here we establish the three results using one construction.

%\subsection{Generative Grammars}

We prove Theorem~\ref{Th:main} by encoding
%In our proof of undecidability we encode %a well-known undecidable 
%algorithmic problem, namely, 
derivations in %unrestricted (type~0) 
generative grammars,
or semi-Thue~\cite{Thue} systems.
A {\em generative grammar} is a quadruple $G = \langle N, \Sigma, P, s \rangle$,
where $N$ and $\Sigma$ are two disjoint alphabets, $s \in N$ is the {\em starting symbol,}
and $P$ is a finite set of {\em productions} (rules) of the form $\alpha \Rightarrow \beta$,
where $\alpha$ and $\beta$ are words over $N \cup \Sigma$. %In our presentation we
%require these words to be non-empty.  FIXME: lower!
The production can be {\em applied} in the following way: $\eta\,\alpha\,\theta \Rightarrow_G
\eta\,\beta\,\theta$, where $\eta$ and $\theta$ are arbitrary (possibly empty) words over
$N \cup \Sigma$. The {\em language generated by $G$} is the set of all
words $\omega$ over $\Sigma$, such that $s \Rightarrow^*_G \omega$, where
$\Rightarrow^*_G$ is the reflexive-transitive closure of
$\Rightarrow_G$.

We use the following classical result by %and appears (independently) in works of
Markov~\cite{Markov} and Post~\cite{Post}.

\begin{theorem}\label{Th:Markov}
There exists a generative grammar $G$ that generates an algorithmically undecidable language.
{\rm\cite{Markov}\cite{Post}}
\end{theorem}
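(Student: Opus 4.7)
The plan is to reduce the halting problem for Turing machines to membership in the generated language, exploiting the fact that the generative grammars defined here---type-0 grammars in the Chomsky hierarchy---are Turing-complete in their recognition power. Since the halting problem yields a recursively enumerable but non-recursive language, producing a generative grammar $G$ whose language $L(G)$ equals such a set will prove the theorem.

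Concretely, I would fix a universal Turing machine $U$ over input alphabet $\Sigma$ whose halting problem on inputs from $\Sigma^*$ is undecidable, and build $G = \langle N, \Sigma, P, s\rangle$ as follows. Encode instantaneous descriptions of $U$ as words of the shape $\#\, u\, \underline{q}\, v\, \#$, where $\underline{q} \in N$ records both the current state and the head position, $u$ and $v$ are tape contents, and $\#$ is an end-marker kept in $N$. The productions $P$ fall into three groups: (i) \emph{launch} rules that derive from $s$ a word of the shape $x\, \#\, \underline{q_0}\, x\, \#$, extending both copies of $x \in \Sigma^*$ symbol-by-symbol in lockstep and terminating with a single sealing rule; (ii) \emph{simulation} rules, one per transition of $U$, rewriting the configuration block locally according to the step relation of $U$; (iii) an \emph{accept} rule that, on detecting a halting state $\underline{q_f}$, erases the entire configuration block $\#\, \dots\, \#$, leaving only the terminal prefix $x$. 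A direct verification of soundness and completeness of the simulation then shows $L(G) = \{x \in \Sigma^* : U \text{ halts on } x\}$, which is undecidable.

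The main obstacle is synchronising the two copies of $x$ so that $G$ cannot cheat by emitting one input while simulating on another. This is handled by making each launch step atomic: a single production emits one terminal symbol $a \in \Sigma$ on the left and simultaneously inserts its non-terminal image into the inner configuration block, and a single seal-and-close production freezes both copies at once. The accept rule must likewise trigger only at canonical halting configurations, never on partial matches; placing the halting-state symbols and the end-markers $\#$ in $N$, and requiring the accept rule to see both enclosing markers simultaneously, prevents premature termination and stray terminal outputs. Once these structural invariants are arranged, derivations in $G$ are in bijective correspondence with halting computations of $U$, completing the reduction and thus establishing the existence of a grammar whose language is algorithmically undecidable.
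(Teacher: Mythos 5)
The paper offers no proof of this statement: it is quoted as a classical theorem of Markov and Post on the unsolvability of the word problem for semi-Thue systems, and your argument --- simulating a (universal) Turing machine by an unrestricted rewriting system so that $L(G)$ becomes its halting set --- is exactly the standard proof behind those citations, so your proposal is correct and takes essentially the same route. Two small remarks. First, the ``lockstep'' emission of the terminal copy and the simulated copy of $x$ cannot literally be a single production unless the two insertion points are adjacent, since a production rewrites one contiguous factor; the usual fixes are composite symbols $[a,a]$ carrying both copies in one letter, or messenger nonterminals that ferry $a$ to the far insertion point --- either works, and this is only an implementation detail. Second, immediately after the theorem the paper additionally restricts to productions with non-empty left- and right-hand sides (this matters later, since the encoding uses formulas $(u_1\cdot\ldots\cdot u_k)\SL(v_1\cdot\ldots\cdot v_m)$ and sequents $v_1,\dots,v_m \to u_1\cdot\ldots\cdot u_k$ with $k,m\geq 1$), citing Buszkowski for the fact that undecidability survives this restriction; your accept phase erases the whole configuration block and so ends in a production with empty right-hand side, which proves the theorem as stated but would need the extra (known) massaging to serve the paper's subsequent construction.
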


In our presentation for every production $(\alpha \Rightarrow \beta) \in P$ we require
$\alpha$ and $\beta$ to be non-empty. This class still includes an
undecidable language (cf.~\cite{Buszko2}). 
%doesn't restrict the power of generative
%grammars: as shown in~\cite{Buszko2}, even a smaller class, so-called binary grammars, already
%covers all languages that can be generated by unrestricted grammars, provided these languages
%do not include the empty word.

Further we use two trivial lemmas about
derivations in a generative grammar:

\begin{lemma}\label{Lm:Gconcat}
If $\alpha_1 \Rightarrow_G^* \beta_1$ and
$\alpha_2 \Rightarrow_G^* \beta_2$, then
$\alpha_1 \alpha_2 \Rightarrow_G^* \beta_1 \beta_2$.
\end{lemma}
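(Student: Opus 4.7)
The plan is to chain the two given derivations, appending context on either side. The key observation is that a single production step $\eta\,\alpha\,\theta \Rightarrow_G \eta\,\beta\,\theta$ is defined with arbitrary left and right context words $\eta,\theta \in (N \cup \Sigma)^*$, so if we already have a step valid in some context, we may freely pad it with additional symbols on either side.

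First I would show, by induction on the length $k$ of the derivation $\alpha_1 \Rightarrow_G^* \beta_1$, that $\alpha_1 \alpha_2 \Rightarrow_G^* \beta_1 \alpha_2$. The base case $k=0$ is trivial (reflexivity). For the inductive step, given $\alpha_1 \Rightarrow_G^* \gamma \Rightarrow_G \beta_1$, we have $\alpha_1 \alpha_2 \Rightarrow_G^* \gamma \alpha_2$ by the induction hypothesis; and the final step $\gamma \Rightarrow_G \beta_1$ has the form $\eta\,\alpha\,\theta \Rightarrow_G \eta\,\beta\,\theta$ for some production $\alpha \Rightarrow \beta$ in $P$, so we may simply replace the right context $\theta$ by $\theta \alpha_2$ to obtain $\gamma \alpha_2 \Rightarrow_G \beta_1 \alpha_2$.

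Analogously, by induction on the length of the derivation $\alpha_2 \Rightarrow_G^* \beta_2$ (padding the left context with $\beta_1$), we get $\beta_1 \alpha_2 \Rightarrow_G^* \beta_1 \beta_2$. Concatenating the two derivations via transitivity of $\Rightarrow_G^*$ yields $\alpha_1 \alpha_2 \Rightarrow_G^* \beta_1 \beta_2$, as required.

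There is no real obstacle here; the lemma is essentially just an unfolding of the definitions, relying on the fact that productions in a semi-Thue system are applied with free context on both sides. The only thing worth being careful about is keeping the left and right padding uniform within each half of the argument, so that the two halves compose cleanly.
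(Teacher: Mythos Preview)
Your proof is correct. The paper does not give a proof of this lemma at all---it simply labels it (together with the next one) as a ``trivial lemma about derivations in a generative grammar''---so your argument by induction on derivation length, padding the context on the right and then on the left, is exactly the standard unfolding the authors have in mind.
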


\begin{lemma}\label{Lm:Gcut}
If $\alpha \Rightarrow_G^* \beta$ and
$\gamma \Rightarrow_G^* \eta\alpha\theta$, then
$\gamma \Rightarrow_G^* \eta\beta\theta$.
\end{lemma}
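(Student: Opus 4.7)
The plan is to reduce the statement to Lemma~\ref{Lm:Gconcat} together with transitivity of $\Rightarrow_G^*$, which is immediate from the fact that $\Rightarrow_G^*$ is defined as the reflexive-transitive closure of $\Rightarrow_G$. First I would use reflexivity to note that $\eta \Rightarrow_G^* \eta$ and $\theta \Rightarrow_G^* \theta$ in zero steps. Combining $\eta \Rightarrow_G^* \eta$ with the hypothesis $\alpha \Rightarrow_G^* \beta$ by Lemma~\ref{Lm:Gconcat} yields $\eta\alpha \Rightarrow_G^* \eta\beta$, and a second application of Lemma~\ref{Lm:Gconcat} with $\theta \Rightarrow_G^* \theta$ gives $\eta\alpha\theta \Rightarrow_G^* \eta\beta\theta$. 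Finally, chaining this with the other hypothesis $\gamma \Rightarrow_G^* \eta\alpha\theta$ by transitivity of $\Rightarrow_G^*$ produces the desired $\gamma \Rightarrow_G^* \eta\beta\theta$.

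An essentially equivalent alternative would be a direct induction on the length of the derivation $\alpha \Rightarrow_G^* \beta$: in the base case $\alpha = \beta$ there is nothing to do, and a single-step reduction $\alpha' \Rightarrow_G \beta'$ is, by the very definition of the application of a production, already the one-step reduction $\eta\alpha'\theta \Rightarrow_G \eta\beta'\theta$ in the wider context, so one simply prepends $\gamma \Rightarrow_G^* \eta\alpha\theta$ to the resulting chain. There is no genuine obstacle here; the only thing to watch is that $\eta$ and $\theta$ are preserved verbatim through each step, which is immediate because productions only rewrite a single designated infix and leave the surrounding context untouched.
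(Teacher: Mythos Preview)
Your proposal is correct. The paper itself does not give a proof of this lemma, calling it trivial, so your explicit argument via Lemma~\ref{Lm:Gconcat} and transitivity (or the equivalent direct induction you outline) is exactly the kind of routine verification the authors had in mind.
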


%\subsection{Lambek Theories}

The second ingredient we need for our undecidability proof is
the concept of {\em theories} over $\LL$. Let $\Tc$ be a finite
set of sequents in the language of $\LL$. %These sequents will be
%considered as the non-logical axioms of our theory over $\LL$. 
Then $\LL + \Tc$ is the calculus % we denote the
%calculus that is obtained 
from $\LL$ by adding sequents from $\Tc$ as
extra axioms. % and also $(\CUT)$ as an official rule of the system.

In general, the cut rule in $\LL+\Tc$ is not eliminable. However,
the standard cut elimination procedure (see~\cite{Lambek58}) yields
the following {\em cut normalization} lemma:

\begin{lemma}\label{Lm:cutnorm}
If a sequent is derivable in $\LL+\Tc$, then
this sequent has a derivation in which every application of $(\CUT)$ %the cut rule 
has a sequent from $\Tc$ as one of its premises.
\end{lemma}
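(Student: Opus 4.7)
The plan is to adapt Lambek's original cut-elimination procedure for $\LL$~\cite{Lambek58} so that it terminates not with a cut-free derivation but with one in which every surviving cut has, as one of its premises, a sequent from $\Tc$; call such cuts \emph{$\Tc$-essential}. The goal is to show that every derivation in $\LL + \Tc$ can be rewritten so that every cut it contains is $\Tc$-essential.

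First I would pick a topmost cut in the given derivation that is not $\Tc$-essential, say
$$
\infer[(\CUT)]{\Delta_1, \Pi, \Delta_2 \to C}
{\Pi \to A & \Delta_1, A, \Delta_2 \to C}.
$$
Neither of its premises is a sequent of $\Tc$, so each premise ends with a proper $\LL$-inference rule or with an $\LL$-axiom $A \to A$ or $\Lambda \to \U$. Next I would run the standard Gentzen-style reduction by nested induction on the complexity $\kappa$ of the cut formula $A$ and the sum $\sigma$ of the sizes of the two subderivations: if $A$ is not principal in the last rule of one of the two premises, commute the cut past that rule (strictly decreasing $\sigma$); if $A$ is principal on both sides, replace the cut by cuts on the immediate subformulae of $A$ (strictly decreasing $\kappa$); and if one premise is an $\LL$-axiom $A \to A$ or $\Lambda \to \U$, the cut simply disappears. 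Eliminating non-$\Tc$-essential cuts one at a time, topmost first, yields the desired normal form.

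The point I want to emphasise, which is the only thing one has to check on top of Lambek's original argument, is that none of these reduction steps ever needs to inspect or modify a $\Tc$-axiom: commutation only looks at the last applied inference rule of a premise, and a $\Tc$-axiom has no such rule—it behaves as an inert leaf. Any new cut created by commutation either inherits a $\Tc$-axiom as one of its premises, in which case it is already $\Tc$-essential and requires no further treatment, or it again has two ``logical'' premises and falls under the induction hypothesis on $(\kappa,\sigma)$. No genuine obstacle is anticipated; the argument is essentially a bookkeeping variant of~\cite{Lambek58} in which ``cut-free'' is replaced by ``$\Tc$-essential.''
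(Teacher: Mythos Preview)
Your approach is exactly the paper's: the paper simply asserts that ``the standard cut elimination procedure (see~\cite{Lambek58}) yields'' the lemma, and your sketch spells out that idea. One omission in your case analysis: you claim that each premise of a topmost non-$\Tc$-essential cut ``ends with a proper $\LL$-inference rule or with an $\LL$-axiom,'' but since $\Tc$-essential cuts are permitted above it, a premise may itself end with a $\Tc$-essential cut---so you also need the routine cut--cut permutation case (when the active formula sits in the antecedent of the $\Tc$-axiom this permutation changes the cut formula, and the $(\kappa,\sigma)$ bookkeeping needs a touch more care than you indicate); once that case is added the argument goes through as you describe.
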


%FIXME

This lemma yields %the following conservativity result 
a weak version of the subformula property:

\begin{lemma}\label{Lm:subfmTh}
If $\;\LL + \Tc \vdash \Pi \to A$, and both $\Pi \to A$ and $\Tc$ include
 no occurrences of $\BS$, $\SL$, and $\U$, then there is
a derivation of $\Pi \to A$ in $\LL+\Tc$ that includes no occurrences 
of $\BS$, $\SL$, and $\U$.
\end{lemma}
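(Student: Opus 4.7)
The plan is to apply the cut normalization lemma (Lemma~\ref{Lm:cutnorm}) and then establish a sub-formula property by induction on the structure of the resulting derivation. First I would take a derivation $\Der$ of $\Pi\to A$ in $\LL+\Tc$ in which every application of $(\CUT)$ has one premise that is a sequent from $\Tc$. Then I would introduce the set $\mathcal{F}$ of all sub-formulas of formulas appearing either in $\Pi\to A$ or in some sequent of $\Tc$. By the hypothesis of the lemma, no formula in $\mathcal{F}$ contains $\BS$, $\SL$, or $\U$, and $\mathcal{F}$ is by construction closed under taking sub-formulas.

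Next, I would prove by induction, proceeding from the root of $\Der$ upwards, that every formula in the sequent at each node belongs to $\mathcal{F}$. At the root the endsequent is $\Pi\to A$, so the base case is immediate. For the inductive step, consider the last rule applied at a node whose conclusion already lies in $\mathcal{F}$. The rules $(\cdot\to)$, $(\to\cdot)$, $({!}\to)$, $(\to{!})$, $(\PERM_1)$, $(\PERM_2)$, and $(\CONTR)$ contribute to each premise only sub-formulas of formulas already in the conclusion, which therefore remain in $\mathcal{F}$. On the other hand, the rules $(\SL\to)$, $(\to\SL)$, $(\BS\to)$, $(\to\BS)$, and $(\U\to)$ would force $\SL$, $\BS$, or $\U$ to appear in the conclusion, contradicting the inductive hypothesis; so these rules, together with the axiom $\Lambda\to\U$, simply cannot occur in $\Der$. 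The identity axioms $B\to B$ and the axioms drawn from $\Tc$ trivially preserve the invariant.

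The case requiring care is $(\CUT)$: the conclusion $\Delta_1,\Pi',\Delta_2\to C$ is obtained from premises $\Pi'\to B$ and $\Delta_1,B,\Delta_2\to C$. Here I would invoke cut normalization: one of these premises is a sequent of $\Tc$, so the cut formula $B$ appears inside a $\Tc$-sequent, placing $B$ in $\mathcal{F}$; combined with the hypothesis that the conclusion lies in $\mathcal{F}$, it follows that every formula in both premises is in $\mathcal{F}$ as well. This is precisely where Lemma~\ref{Lm:cutnorm} is essential---without it the cut formula would be unconstrained. Once the induction is complete, $\Der$ itself witnesses the claim, since it contains only formulas from $\mathcal{F}$ and hence no occurrence of $\BS$, $\SL$, or $\U$. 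The one potentially subtle point is really just the $(\CUT)$ case, which is handled in a single line thanks to the preceding cut normalization lemma.
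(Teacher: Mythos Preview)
Your approach is correct and matches the paper's: the paper merely states that this lemma follows from the cut normalization lemma (Lemma~\ref{Lm:cutnorm}), and you have spelled out exactly the intended subformula-property argument. One small inaccuracy: $\LL$ has no ${!}$ modality, so the rules $({!}\to)$, $(\to{!})$, $(\PERM_i)$, and $(\CONTR)$ should simply be omitted from your case analysis rather than listed among the rules to check.
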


%\subsection{The $(\SHOOT)$ Rule}

%In this section we prove undecidability of $\CC$, $\CCbfp$, and $\CCwk$ by
%encoding derivation in generative (type~0) grammars.
%The key element of the construction is the ``shooting'' rule below.
% that
%allow

The third core element of the construction is the $(\SHOOT)$ %following 
%``shooting'' rule:
%admissible 
rule which allows to place a specific formula $A$ into an arbitrary place in the sequent.

\begin{lemma}\label{Lm:shoot}
The following rule is admissible in $\CC$:
$$
\infer[(\SHOOT)]
{\Delta_1, {!}\,\NMod A, \Delta_2, \Delta_3 \to C}
{\Delta_1, {!}\,\NMod A, \Delta_2, A, \Delta_3 \to C}
$$
\end{lemma}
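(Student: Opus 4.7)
The plan is to build the admissibility directly, by appending just a handful of rule applications to the given derivation of the premise. The key idea is to convert the explicit $A$ in the premise into a bracketed $[{!}\NMod A]$ using $(\NMod\to)$ followed by $({!}\to)$. Together with the outer ${!}\NMod A$, this sets up exactly the pattern needed by the bracket-aware contraction rule $(\CONTRb)$, after which a pair of permutations shepherds the non-bracketed copy of ${!}\NMod A$ back into the desired position.

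Concretely, starting from a derivation of $\Delta_1, {!}\NMod A, \Delta_2, A, \Delta_3 \to C$, I would first apply $(\NMod\to)$ at the distinguished occurrence of $A$ to obtain $\Delta_1, {!}\NMod A, \Delta_2, [\NMod A], \Delta_3 \to C$, and then $({!}\to)$ on the new $\NMod A$ inside the bracket to reach $\Delta_1, {!}\NMod A, \Delta_2, [{!}\NMod A], \Delta_3 \to C$. Next I would apply $(\PERM_1)$ with $\Gamma = \Delta_2$ to push the original ${!}\NMod A$ across $\Delta_2$ so that it ends up immediately adjacent to the bracket, giving $\Delta_1, \Delta_2, {!}\NMod A, [{!}\NMod A], \Delta_3 \to C$. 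An application of $(\CONTRb)$ with $n=1$, $A_1 = \NMod A$, and $\Gamma = \Lambda$ then contracts the two copies into $\Delta_1, \Delta_2, {!}\NMod A, \Delta_3 \to C$, after which a final $(\PERM_2)$ with $\Gamma = \Delta_2$ restores $\Delta_1, {!}\NMod A, \Delta_2, \Delta_3 \to C$, which is the conclusion of $(\SHOOT)$.

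The only point that really needs checking is that the parameter $\Gamma$ appearing in the schemas of $(\PERM_1)$, $(\PERM_2)$ and $(\CONTRb)$ is genuinely allowed to be an arbitrary meta-formula, so that $\Delta_2$ (which may itself carry brackets and nested structure) can legitimately play that role; inspection of the rule schemas in Section~\ref{S:Calculus} reveals no such restriction, so this is unproblematic. I therefore do not anticipate a serious obstacle: the whole argument amounts to a five-step transformation appended to the given derivation, with neither cut nor induction involved, so admissibility follows uniformly in the derivation of the premise.
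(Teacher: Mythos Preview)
Your proposal is correct and follows exactly the same five-step derivation as the paper: $(\NMod\to)$, then $({!}\to)$, then $(\PERM_1)$ across $\Delta_2$, then $(\CONTRb)$ with $n=1$ and $\Gamma=\Lambda$, then $(\PERM_2)$ back across $\Delta_2$. Your side remark that $\Gamma$ in the permutation and contraction schemas ranges over arbitrary meta-formulae is also the right reading of the rules.
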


\begin{proof}
$$\footnotesize
\infer[(\PERM_2)]{\Delta_1, {!}\,\NMod A, \Delta_2, \Delta_3 \to C}
{\infer[(\CONTRb)]{\Delta_1, \Delta_2, {!}\,\NMod A, \Delta_3 \to C}
{\infer[(\PERM_1)]{\Delta_1, \Delta_2, {!}\,\NMod A, [{!}\,\NMod A], \Delta_3 \to C}
{\infer[({!}\to)]{\Delta_1, {!}\,\NMod A, \Delta_2, [{!}\,\NMod A], \Delta_3 \to C}
{\infer[(\NMod\to)]{\Delta_1, {!}\,\NMod A, \Delta_2, [\NMod A], \Delta_3 \to C}
{\Delta_1, {!}\,\NMod A, \Delta_2, A, \Delta_3 \to C}
}}}}
$$
\end{proof}

%\subsection{The Proof}

%FIXME: definition of a grammar in binary NF, cite Buszkowski (new)

Now we are ready to prove Theorem~\ref{Th:main}. Let $G = \langle N, \Sigma, P, s \rangle$ be 
the grammar provided by Theorem~\ref{Th:Markov},
%an arbitrary %generative
%grammar in binary normal form, 
 and the set of variables include $N \cup \Sigma$.
We convert productions of $G$ into Lambek formulae in the following natural way:\\ 
$
\Bc_G = \{ (u_1 \cdot \ldots \cdot u_k) \SL (v_1 \cdot \ldots \cdot v_m) \mid
(u_1 \ldots u_k \Rightarrow v_1 \ldots v_m) \in P \}.
%\Bc_G = \{   u \SL (v_1 \cdot v_2) \mid (u \Rightarrow v_1 v_2) \in P \}
%\cup \{ (v_1 \cdot v_2) \SL w \mid (v_1 v_2 \Rightarrow w) \in P \}.
$

%\noindent
For $\Bc_G = \{ B_1, \dots, B_n \}$, we define the following sequences of formulae: 
\begin{align*}
&\Gamma_G = {!}B_1, \dots, {!}B_n, && \Phi_G = {!}(\U \SL ({!}B_1)), \dots, {!}(\U \SL ({!} B_n)),\\
&\widetilde{\Gamma}_G = {!}\,\NMod B_1, \dots, {!}\,\NMod B_n, &&
\widetilde{\Phi}_G = {!}(\U \SL ({!}\, \NMod B_1)), \dots, {!}(\U \SL ({!}\, \NMod B_n)).
\end{align*}
(Since in all calculi we have permutation rules for formulae under
${!}$, the ordering of $\Bc_G$ doesn't matter.)
We also define a theory $\Tc_G$ associated with $G$, as follows:
$
\Tc_G = \{ v_1, \dots, v_m \to u_1 \cdot \ldots \cdot u_k \mid
(u_1 \ldots u_k \Rightarrow v_1 \ldots v_m) \in P \}.
%\Tc_G = \{ v_1, v_2 \to u \mid (u \Rightarrow v_1 v_2) \in P \}
%\cup \{ w \to v_1 \cdot v_2 \mid (v_1 v_2 \Rightarrow w) \in P \}
$

%%(Note that the direction of arrows in $\Tc_G$ is opposite to th

\begin{lemma}\label{Lm:main}
The following are equivalent:
\begin{enumerate}
\item $s \Rightarrow^*_G a_1 \dots a_n$ (i.e., %the word
$a_1 \dots a_n$ belongs to the language defined by  $G$);
\item $\CC \vdash \widetilde{\Phi}_G, \widetilde{\Gamma}_G, a_1, \dots,
a_n \to s$;
\item $\CCbfp \vdash \Phi_G, \Gamma_G, a_1, \dots, a_n \to s$;
\item $\CCwk \vdash \Gamma_G, a_1, \dots, a_n \to s$;
\item $\LL + \Tc_G \vdash a_1, \dots, a_n \to s$.
%\item $\LL + \Rc \vdash \Delta \to C$;
%\item $\CC \vdash \widetilde{\Phi}_{\Gc_\Rc}, \widetilde{\Gamma}_{\Gc_\Rc},
%\Delta \to C$;
%\item $\CCbfp \vdash \Phi_{\Gc_\Rc}, \Gamma_{\Gc_\Rc}, \Delta \to C$;
%\item $\CCwk \vdash \Phi_{\Gc_\Rc}, \Gamma_{\Gc_\Rc}, \Delta \to C$;
%\item $\CCwk \vdash \Gamma_{\Gc_\Rc}, \Delta \to C$. 
\end{enumerate}
\end{lemma}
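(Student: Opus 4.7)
I would prove the equivalence as a cycle $(1) \Rightarrow (2) \Rightarrow (3) \Rightarrow (4) \Rightarrow (5) \Rightarrow (1)$. For $(1) \Rightarrow (2)$, I would argue by induction on the length of $s \Rightarrow_G^* a_1 \ldots a_n$. In the base case $n = 1$, $a_1 = s$, one starts from $s \to s$, inserts $n$ copies of $\U$ in the antecedent via $(\U\to)$, and cuts each $\U$ against the easily derivable sequent ${!}(\U \SL ({!}\,\NMod B_i)), {!}\,\NMod B_i \to \U$; the permutation rules for ${!}$ then reorder the antecedent into $\widetilde{\Phi}_G, \widetilde{\Gamma}_G, s \to s$. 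In the inductive step, when the last grammar step replaces $u_1 \ldots u_k$ by $v_1 \ldots v_m$ via the production associated with $B_j \in \Bc_G$, one applies Lemma~\ref{Lm:shoot} to insert $B_j$ just before $v_1$ (exploiting the persistent ${!}\,\NMod B_j$ in $\widetilde{\Gamma}_G$), then applies $(\SL\to)$ against the easy left premise $v_1, \ldots, v_m \to v_1 \cdot \ldots \cdot v_m$ (obtained by iterated $(\to\cdot)$) and a right premise reduced to the inductive hypothesis by iterated $(\cdot\to)$ applied to $u_1 \cdot \ldots \cdot u_k$.

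The step $(2) \Rightarrow (3)$ is immediate from Lemma~\ref{Lm:BFP}, since $\BFP{\widetilde{\Phi}_G} = \Phi_G$ and $\BFP{\widetilde{\Gamma}_G} = \Gamma_G$. For $(3) \Rightarrow (4)$, the derivation lifts verbatim to $\CCwk$, in which $\Lambda \to {!}(\U \SL ({!}B_i))$ is provable from $\Lambda \to \U$ via $(\WEAK), (\to\SL), (\to{!})$; iteratively cutting these in eliminates $\Phi_G$ from the antecedent. For $(4) \Rightarrow (5)$, one first derives $\LL + \Tc_G \vdash \Lambda \to B_i$ by folding each $\Tc_G$-axiom via $(\cdot\to)$ and $(\to\SL)$, and then translates a cut-free $\CCwk$-derivation of $\Gamma_G, a_1, \ldots, a_n \to s$ into $\LL + \Tc_G$ by dropping every ${!}B_i$-formula and replacing each $({!}\to)$-introduction of $B_i$ by a cut against $\Lambda \to B_i$; the structural rules $(\WEAK), (\CONTR), (\PERM_{1,2})$ become no-ops under this translation, and the subformula property forbids ${!}$ from appearing nontrivially in intermediate succedents since the endsequent has no ${!}$-subformulas below the top level of $\Gamma_G$.

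For $(5) \Rightarrow (1)$, I would apply Lemma~\ref{Lm:cutnorm} and Lemma~\ref{Lm:subfmTh} to restrict attention to derivations using only identity axioms, $\Tc_G$-axioms, $(\cdot\to)$, $(\to\cdot)$, and cuts with a $\Tc_G$-axiom as one premise, and then induct on this derivation with the strengthened statement: for any derivable $\Delta \to A$ in this fragment, with $\Delta, A$ built only from $N \cup \Sigma$ and the product, one has $\sigma(A) \Rightarrow_G^* \sigma(\Delta)$, where $\sigma$ flattens products to words. The $(\to\cdot)$ case invokes Lemma~\ref{Lm:Gconcat}; the $(\cdot\to)$ case is immediate since $\sigma$ ignores associativity; each cut with a $\Tc_G$-axiom uses Lemma~\ref{Lm:Gcut} to insert one production step at the position of the cut. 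The main obstacle is precisely this last direction: because cuts with $\Tc_G$-axioms create intermediate sequents whose antecedents mix primitive letters with product formulas, the induction must be formulated for this broader class, and the position of each cut in the nested context must be carefully tracked against the position of the production step in the grammar derivation.
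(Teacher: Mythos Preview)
Your proposal is correct and follows essentially the same cycle and the same arguments as the paper's proof: the induction with $(\SHOOT)$ for $1\Rightarrow 2$, the bracket-forgetting projection for $2\Rightarrow 3$, cutting against $\Lambda \to {!}(\U\SL{!}B_i)$ for $3\Rightarrow 4$, erasing ${!}$-formulae and simulating $({!}\to)$ by cut with $\Lambda \to B_i$ for $4\Rightarrow 5$, and the product-flattening induction via Lemmas~\ref{Lm:Gconcat} and~\ref{Lm:Gcut} for $5\Rightarrow 1$. The only cosmetic difference is that your base case for $1\Rightarrow 2$ uses cut against ${!}(\U\SL{!}\,\NMod B_i),{!}\,\NMod B_i \to \U$, whereas the paper writes out the equivalent cut-free derivation directly; and what you flag as the ``main obstacle'' in $5\Rightarrow 1$ is handled exactly as you describe, by the flattening map $\sigma$ (the paper simply omits parentheses and dots to the same effect), so it is not an actual gap.
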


%FIXME: BAD!!! Cannot branch the subexponential on $(\SL\to)$ and
%$(\BS\to)$. Restrict to primitive-only $\Delta$s??
%This will work for the full calculus (modelling Turing machines).

\begin{proof}
\fbox{$1 \Rightarrow 2$}
Proceed by induction on $\Rightarrow^*_G$. 
The base case is handled as follows:
$$\footnotesize
\infer[(\PERM)^*]{\widetilde{\Phi}_G, \widetilde{\Gamma}_G, s \to s}
{\infer[({!}\to)^*]{{!}(\U \SL {!}\,\NMod B_1), {!}\,\NMod B_1, \dots, 
{!}(\U \SL {!}\,\NMod B_n), {!}\,\NMod B_n, s \to s}
{\infer[(\SL\to)^*]{\U \SL {!}\,\NMod B_1, {!}\,\NMod B_1, \dots, 
\U \SL {!}\,\NMod B_n, {!}\,\NMod B_n, s \to s}
{{!}\,\NMod B_1 \to {!}\,\NMod B_1 & \ldots
& {!}\,\NMod B_n \to {!}\,\NMod B_n & 
\infer[(\U\to)^*]{\U, \dots, \U, s \to s}{s \to s}
}}}
$$

For the induction step let the last production be $u_1 \dots u_k \Rightarrow v_1 \dots v_m$,
{\em i.e.,} %consider two cases.
%\noindent
%{\sl Case 1.} The last production is $u \Rightarrow v_1 v_2$:
$
s \Rightarrow_G^* \eta\, u_1 \dots u_k \, \theta \Rightarrow_G \eta\, v_1 \dots v_m\, \theta.
$

Then, since ${!}\,\NMod ((u_1 \cdot \ldots \cdot u_k) \SL (v_1 \cdot \ldots \cdot v_m))$ 
is in $\widetilde{\Gamma}_G$, we enjoy the following: % derivation:
$$\footnotesize
\infer[(\SHOOT)]
{\widetilde{\Phi}_G, \widetilde{\Gamma}_G, \eta, v_1, \dots, v_m, \theta \to s}
{\infer[(\SL\to)]
{\widetilde{\Phi}_G, \widetilde{\Gamma}_G, \eta, (u_1 \cdot\ldots\cdot u_k) \SL (v_1 \cdot\ldots\cdot v_m), v_1, \dots, v_m, \theta \to s}
{\infer[(\to\cdot)^*]{\strut v_1, \dots, v_m \to v_1 \cdot\ldots\cdot v_m}{v_1 \to v_1 & \dots & v_m \to v_m} & 
\infer[(\cdot\to)^*]{\widetilde{\Phi}_G, \widetilde{\Gamma}_G, \eta, u_1 \cdot\ldots\cdot u_k, \theta \to s}
{\widetilde{\Phi}_G, \widetilde{\Gamma}_G, \eta, u_1, \dots, u_k, \theta \to s}}
}
$$
Here  $\widetilde{\Phi}_G, \widetilde{\Gamma}_G, \eta, u_1, \dots, u_k, \theta \to s$ is derivable in $\CC$ by
induction hypothesis, and the $(\SHOOT)$ rule is admissible due to Lemma~\ref{Lm:shoot}.

%\noindent
%{\sl Case 2.} The last production is $v_1 v_2 \Rightarrow w$:
%$$
%s \Rightarrow_G^* \eta\, v_1 v_2\, \theta \Rightarrow_G \eta\, w\, \theta.
%$$

%Similarly, 
%$$
%\infer[(\SHOOT)]
%{\widetilde{\Phi}_G, \widetilde{\Gamma}_G, \eta, w, \theta \to s}
%{\infer[(\SL\to)]
%{\widetilde{\Phi}_G, \widetilde{\Gamma}_G, \eta, (v_1 \cdot v_2) \SL w, w, \theta \to s}
%{w \to w & \infer[(\cdot\to)]{\widetilde{\Phi}_G, \widetilde{\Gamma}_G, \eta, v_1 \cdot v_2, \theta \to s}
%{\widetilde{\Phi}_G, \widetilde{\Gamma}_G, \eta, v_1, v_2, \theta \to s}}
%}
%$$
%Here, again, $\widetilde{\Phi}_G, \widetilde{\Gamma}_G, \eta, v_1, v_2, \theta \to s$ is derivable due to
%induction hypothesis, and the $(\SHOOT)$ rule is admissible by Lemma~\ref{Lm:shoot}, because
%$(v_1 \cdot v_2) \SL w$ is in $\widetilde{\Gamma}_G$.

\fbox{$2 \Rightarrow 3$}
Immediately by Lemma~\ref{Lm:BFP}, since $\Phi_G = \BFP{\widetilde{\Phi}_G}$ and
$\Gamma_G = \BFP{\widetilde{\Gamma}_G}$.

\fbox{$3 \Rightarrow 4$}
For each formula ${!}(\U \SL {!} B_i)$ from $\Phi_G$ the sequent $\Lambda \to 
{!}(\U\SL {!}B_i)$ is derivable in $\CCwk$ by consequent application of
$(\WEAK)$, $(\to\SL)$, and $(\to{!})$ to the $\Lambda \to \U$ axiom.
%
%$$
%\infer[(\to{!})]{\Lambda \to {!}(\U\SL {!}B_i)}
%{\infer[(\to\SL)]{\Lambda \to \U \SL {!}B_i}
%{\infer[(\WEAK)]{{!}B_i \to \U}{\Lambda\to\U}
%}}
%$$
The sequent $\Phi_G, \Gamma_G, a_1, \dots, a_n \to s$  is derivable
in $\CCbfp$ and therefore in $\CCwk$, and applying $(\CUT)$ for each formula of $\Phi_G$ 
yields $\Gamma_G, a_1, \dots, a_n \to s$.

\fbox{$4 \Rightarrow 5$}
In this part of our proof we follow~\cite{LMSS92} and~\cite{Kanazawa}. %FIXME: be careful here for subrefs [LMSS?]
 Consider the derivation
of $\Gamma_G, a_1, \dots, a_n \to s$ in $\CCwk$ (recall that by default
all derivations are cut-free) and remove all the formulae of the form ${!}B$ from
all sequents in this derivation. After this transformation the rules not
operating with ${!}$ remain valid. Applications of
$(\PERM_i)$, $(\WEAK)$, and $(\CONTR)$ do not alter the sequent. The
$(\to{!})$ rule is never applied in the original derivation, since our sequents never have formulae of
the form ${!}B$ in their succedents. Finally, an application of $({!}\to)$,
$$\footnotesize
\infer[,]
{\Delta_1, \Delta_2 \to C}
{\Delta_1, (u_1 \cdot \ldots \cdot u_k) \SL (v_1 \cdot \ldots \cdot v_m), \Delta_2 \to C}
$$
is simulated in $\LL + \Tc_G$ in the following way:
$$
\scriptsize
\infer[(\CUT)]
{\Delta_1, \Delta_2 \to C}
{\infer[(\to\SL)]{\Lambda \to (u_1 \cdot \ldots \cdot u_k) \SL (v_1 \cdot \ldots \cdot v_m)}
{\infer[(\cdot\to)^*]{\strut v_1 \cdot \ldots \cdot v_m \to  u_1 \cdot \ldots \cdot u_k}
{v_1, \dots, v_m \to u_1 \cdot \ldots \cdot u_k}} &
 \Delta_1, (u_1 \cdot \ldots \cdot u_k) \SL (v_1 \cdot \ldots \cdot v_m), \Delta_2 \to C}
$$

%... follow Kanazawa

\fbox{$5 \Rightarrow 1$}
%... follow LMSS
In this part we follow~\cite{LMSS92}.
Let $\LL + \Tc_G \vdash a_1, \dots, a_n \to s$. By Lemma~\ref{Lm:subfmTh}, this sequent has
a derivation without occurrences of $\BS$, $\SL$, and $\U$. In other words, all formulae
in this derivation are built from variables using only the product.
Since it is associative, we can omit parenthesis in the formulae;
we shall also omit the ``$\cdot$''s. The rules used in this derivation can
now  be written as follows:
$$
\infer[(\to\cdot)]{\beta_1 \beta_2 \to \alpha_1 \alpha_2}{\beta_1 \to \alpha_1 & \beta_2 \to \alpha_2}
\qquad
\infer[(\CUT)]{\eta\beta\theta \to \gamma}{\beta \to \alpha & \eta \alpha \theta \to \gamma}
$$
The $(\cdot\to)$ rule is trivial. The axioms are productions of $G$ with
the arrows inversed, and $\alpha \to \alpha$. By induction, using Lemmas~\ref{Lm:Gconcat} and~\ref{Lm:Gcut}, we show that
if $\beta \to \alpha$ is derivable using these rules and axioms, then $\alpha \Rightarrow_G^* \beta$. Now the
derivability of $a_1, \dots, a_n \to s$
implies $s \Rightarrow_G^* a_1 \ldots a_n$.
%TRASH::
%\fbox{$4 \Rightarrow 1$} In this part we generally follow the argument from~\cite{KanKuzSce}.
%First we notice that for deriving sequents of the form ${!}\Gamma, \Pi \to C$, where
%$\Gamma$, $\Pi$, and $C$ do not contain ${!}$, one can use a simplified calculus:
%$$
%{!}\Gamma, p \to p
%$$
%$$
%\infer[(\to\SL)]{{!}\Gamma, \Pi \to A \SL B}{{!}\Gamma, \Pi, B \to A}
%\qquad
%\infer[(\SL\to)]{{!}\Gamma, \Delta_1, A \SL B, \Pi, \Delta_2 \to C}
%{{!}\Gamma, \Pi \to B & {!}\Gamma, \Delta_1, A, \Delta_2 \to C}
%$$
%$$
%\infer[(\SHOOT)\text{, where $B$ is a type from $\Gamma$}]
%{{!}\Gamma, \Delta_1, \Delta_2 \to A}{{!}\Gamma, \Delta_1, B,
%\Delta_2 \to A}
%$$
%
%FIXME FIXME FIXME: the product!! (do tomorrow)
%Moreover, the rules in any derivation can be interchanged, so that
%$(\SHOOT)$ is always applied immediately below $(\SL\to)$.
%First we consider the cut-free derivation. Then notice that
%the rules in it can be interchanged in such a way that $(\SHOOT)$ is
%always applied immediately below the $(\SL\to)$ on the same formula.
%Finally, we always keep the formulae with ${!}$ on the left of the
%antecedent, and apply all weakenings immediately below the axioms
%and contractions---after applications of two-premise rules.
%In the proof in this canonical form each pair of $(\SL\to)$ and
%$(\SHOOT)$ rules exactly corresponds to an application of the corresponding
%production in the grammar.
%FIXME: rewrite the last part in Kanazawa's style (using cuts instead
%of proof analysis)
\end{proof}

Lemma~\ref{Lm:main} and Theorem~\ref{Th:Markov} conclude the proof of %immediately yield
Theorem~\ref{Th:main}.

\section{A Decidable Fragment}\label{S:effective}

%FIXME change in progress....

The undecidability results from the previous section are somewhat
unfortunate, since the new operations added to $\LL$ have good
linguistic motivations~\cite{MorVal}\cite{MorrillBook}. As a compensation,
in this section we show NP-decidability for a substantial fragment of
$\CC$, introduced by Morrill and Valent\'{\i}n~\cite{MorVal}
(see Definition~\ref{Df:BNNC} below).
This complexity upper bound is tight, since the original Lambek calculus 
is already known to be NP-complete~\cite{PentusNP}.
Notice that Morrill and Valent\'{\i}n present an exponential time
algorithm for deciding derivability in this fragment; this algorithm
was implemented as part of a parser called CatLog~\cite{CatLog}. %Here we show that the derivability problem
%actually belongs to the NP class. %, that is narrower than the class of problems decidable
%in exponential time.

%FIXME maybe grammars

First we recall the standard notion of {\em polarity} of occurrences of subformulae in
a formula. Every formula occurs positively in itself; subformula polarities get inverted (positive becomes
negative and vice versa) when descending into denominators of $\BS$ and $\SL$ and also for the left-hand side of
the sequent; brackets and all unary operations don't change polarity.
\iffalse
\begin{definition}
The notion of positive and negative subformula occurrences is defined
recursively as follows:
\begin{enumerate}
\item every formula occurs positively in itself;
\item if a subformula occurs positively (resp., negatively) in $B$, then it occurs also positively 
(resp., negatively) in $A \BS B$, $B \SL A$, $A \cdot B$, $B \cdot A$, $\PMod B$, $\NMod B$,
and ${!}B$ (for an arbitrary $A$);
\item if a subformula occurs positively (resp., negatively) in $A$, then it occurs negatively
(resp., positively) in $A \BS B$ and $B \SL A$;
\item if a subformula occurs positively in $B$, then it occurs also positively in $\Gamma \to B$ (for any $\Gamma$);
\item if a subformula occurs positively in a formula from $\Gamma$, then it occurs negatively in $\Gamma \to B$.
\end{enumerate}
\end{definition}
\fi
All inference rules of $\CC$ respect polarity: a positive (resp., negative) occurrence of a subformula in the
premise(s) of the rule translates into a positive (resp., negative) occurrence in the goal.

\begin{definition}\label{Df:BNNC}
An $\CC$-sequent $\Gamma \to B$ obeys the bracket non-negative condition, if any
negative occurrence of a subformula of the form ${!}A$ in $\Gamma \to B$ includes
neither a positive occurrence of a subformula of the form $\NMod C$, nor a negative
occurrence of a subformula of the form $\PMod C$.
\end{definition}

Note that sequents used in our undecidability proof are exactly the minimal violations
of this bracket non-negative condition. %FIXME open problem of rk 1

\begin{theorem}\label{Th:NP}
The derivability problem in $\CC$ for sequents that obey the bracket non-negative condition
belongs to the {\rm NP} class.
\end{theorem}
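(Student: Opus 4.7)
The plan is to produce, for every derivable BNNC sequent, a polynomial-size cut-free derivation that can serve as an NP certificate. Cut elimination (Theorem~\ref{Th:cutelim}) justifies restricting attention to cut-free derivations, so the goal reduces to bounding the size of such derivations.

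First I would verify that the bracket non-negative condition propagates in cut-free proof search: every rule other than $(\CONTRb)$, applied bottom-up, takes a BNNC conclusion to BNNC premises, because all such rules either leave subformulae in place or introduce subformulae already present in the goal with the correct polarity. This is a routine but necessary inductive check; the only subtle case is $(\CONTRb)$ itself, which is addressed next.

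The key step is to bound the number of useful $(\CONTRb)$ applications. Applying $(\CONTRb)$ bottom-up introduces a fresh bracket $[{!}A_1,\ldots,{!}A_n,\Gamma]$ in the antecedent, and this bracket must eventually be discharged before reaching an axiom, since $A\to A$ and $\Lambda\to\U$ contain no brackets. The only discharge mechanisms in cut-free $\CC$ are $(\NMod\to)$, which requires a $[\NMod B]$ configuration at the bracket boundary, and $(\to\PMod)$, which requires the whole antecedent to be a single bracket against a $\PMod$-succedent. Under BNNC the negatively-occurring $!A_i$ contain neither a positive $\NMod$ subformula nor a negative $\PMod$ subformula, so a case analysis shows that the discharge of the bracket must be ``triggered'' by a bracket-processing subformula originating from $\Gamma$ or from the outer context $\Delta(\cdot)$. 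Each such trigger can be tied to a fixed subformula occurrence of the original goal, and I would argue that the same occurrence cannot trigger two distinct $(\CONTRb)$-introduced brackets without violating BNNC. This would give a polynomial bound (in the size of the input) on the total number of productive $(\CONTRb)$ applications in any branch of the derivation.

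Once the number of $(\CONTRb)$ applications is bounded polynomially, a standard analysis of the remaining rules---exploiting invertibility of the permutation, modal and unit rules, together with the usual subformula-based bounds on the Lambek connective rules, as in Pentus's proof of NP-membership for the pure Lambek calculus and the bracket-calculus analysis of~\cite{KanKuzMorSce}---yields polynomial-size derivations. The NP algorithm then guesses such a derivation and verifies it in polynomial time. The main obstacle is precisely the bracket-discharge case analysis under BNNC: the undecidability construction of Theorem~\ref{Th:main} employs essentially minimal BNNC violations (positive $\NMod$ placed directly underneath a negative $!$), so the bounding argument must be tight enough to degrade exactly when BNNC is dropped, which requires a careful trace of how each bracket produced by $(\CONTRb)$ migrates through the proof tree and interacts with the rules eliminating $\NMod$, $\PMod$, or a top-level $\PMod$-succedent.
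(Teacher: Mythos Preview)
Your core idea coincides with the paper's: bound the number of $(\CONTRb)$ applications by tying each one to a bracket that must be discharged by a negative $\NMod$-occurrence or a positive $\PMod$-occurrence, and then use BNNC to argue that such occurrences are never duplicated by contraction and hence correspond to distinct subformula occurrences in the goal. The paper phrases this top-down rather than bottom-up: every bracket pair in the proof is \emph{created} by some $(\NMod\to)$ or $(\to\PMod)$ and \emph{destroyed} by $(\PMod\to)$, $(\to\NMod)$, or $(\CONTRb)$, so the number of $(\CONTRb)$ applications is at most the number of $(\NMod\to)$ and $(\to\PMod)$ applications; BNNC then guarantees that the principal formulae of the latter rules are never inside a contracted $!A$, hence trace to distinct occurrences in the goal. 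This direct counting avoids your ``case analysis'' of how a specific $(\CONTRb)$-bracket migrates and gets discharged, and immediately gives a \emph{linear} (not merely polynomial) bound on contractions.

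There are two genuine gaps in your outline. First, appealing to ``invertibility of the permutation rules'' does not bound derivation size: $(\PERM_1)$ and $(\PERM_2)$ can be iterated arbitrarily, so cut-free derivations of a BNNC sequent are not a~priori of bounded length. The paper fixes this by replacing $(\PERM_{1,2})$ with a single generalised permutation rule $(\PERM)^*$ and restricting to \emph{normal} derivations with no two consecutive $(\PERM)^*$ applications; only then is the number of permutation steps dominated by the number of other rule applications. Second, your ``standard analysis'' citing Pentus and~\cite{KanKuzMorSce} does not directly apply once $(\CONTRb)$ is present: a logical connective introduced high in the tree may be merged away by a later contraction rather than reach the goal. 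The paper handles this explicitly: each logical rule introduces one connective occurrence, which either reaches the goal ($\le n$ such) or is absorbed by some $(\CONTRb)$, and each $(\CONTRb)$ absorbs at most $n$ connective occurrences (subformula property) while there are at most $n$ contractions, giving the quadratic bound. You should make both of these steps explicit rather than deferring to the bracket-free or $!$-free literature.
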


%FIXME: refer to the `positive' part of our FG paper ... much stronger here!!

%FIXME: here: formulate the theorem

Derivations in $\CC$ are a bit incovenient for complexity estimations, since redundant applications
of permutation rules could make the proof arbitrarily large without increasing its ``real'' complexity.
In order to get rid of that, we introduce a {\em generalised form of permutation rule:}
$$
\infer[(\PERM)^*]{\Delta'_0, {!}A_{i_1}, \Delta'_1, {!}A_{i_2}, \Delta'_2, \ldots, \Delta'_{i_{k-1}}, {!}A_{i_k}, \Delta'_{i_k} \to C}
{\Delta_0, {!}A_1, \Delta_1, {!}A_2, \Delta_2, \ldots, \Delta_{k-1}, {!}A_k, \Delta_k \to C}
$$
where the sequence $\Delta'_0, \ldots, \Delta'_k$ coincides with $\Delta_0, \dots, \Delta_k$, and
$\{ i_1, \dots, i_k \} = \{ 1, \dots, k \}$. Obviously, $(\PERM)^*$ is admissible in $\CC$, and it subsumes
$(\PERM_{1,2})$, so further
we consider a formulation of $\CC$ with $(\PERM)^*$ instead of $(\PERM_{1,2})$. Several consecutive applications
of $(\PERM)^*$ can be merged into one. We call a derivation {\em normal,}  if it doesn't
contain consecutive applications of $(\PERM)^*$. If a sequent is derivable in $\CC$, then it has a normal
cut-free derivation.

\begin{lemma}\label{Lm:NP}
Every normal cut-free derivation of a sequent that obeys bracket non-negative
restriction is of quadratic size (number of rule applications) w.r.t.\ the size of the
goal sequent.
\end{lemma}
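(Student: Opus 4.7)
The plan is to bound the total number of rule applications in any normal cut-free derivation $\Der$ of a sequent $\Gamma\to B$ obeying the bracket non-negative condition by $O(n^2)$, where $n=|\Gamma\to B|$. Since $\Der$ is cut-free, the standard subformula property holds: every formula occurring in any sequent of $\Der$ is a subformula of $\Gamma\to B$. The central difficulty is that $(\CONTRb)$ can duplicate $!$-formulas and inflate sequents, so the main task is bounding $c$, the number of $(\CONTRb)$ applications in $\Der$, by $O(n)$; once this is done, everything else will follow by a routine count.

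For the bound on $c$ I would run a bracket-counting argument. Reading $\Der$ from root to leaves, brackets are introduced only by $(\CONTRb)$, $(\PMod\to)$, and $(\to\NMod)$, and eliminated only by $(\to\PMod)$ and $(\NMod\to)$. Each bracket has a unique birth (either already in $\Gamma$ or at one of the introducing rules) and a unique death, because axioms are bracket-free and the branching rules $(\SL\to)$, $(\BS\to)$, $(\to\cdot)$ distribute rather than duplicate brackets between their two premises. This yields the conservation identity
\begin{equation*}
\#(\to\PMod) + \#(\NMod\to) \;=\; (\text{goal brackets}) + c + \#(\PMod\to) + \#(\to\NMod),
\end{equation*}
so in particular $c \le \#(\to\PMod) + \#(\NMod\to)$. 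I would then observe that each $(\to\PMod)$ consumes one positive $\PMod$-occurrence in some succedent, and each $(\NMod\to)$ consumes one negative $\NMod$-occurrence in some antecedent; by polarity preservation every such occurrence traces back to a positive $\PMod$- or negative $\NMod$-subformula occurrence of $\Gamma\to B$. The bracket non-negative condition forbids such occurrences inside any negative $!A$-subformula of the goal, and $(\CONTRb)$ duplicates only negative $!A$'s, so contraction cannot create new bracket-eliminating occurrences. Furthermore each goal occurrence of the relevant form is consumed at most once along its unique trail through the branching. Hence $\#(\to\PMod), \#(\NMod\to) \le O(n)$, giving $c=O(n)$.

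With $c=O(n)$ in hand the rest is straightforward. Each logical decomposition rule consumes one connective occurrence; the total connective stock of $\Der$ consists of the $O(n)$ connectives in $\Gamma\to B$ plus the $O(n)$ connectives added by each of the $O(n)$ contractions, giving $O(n^2)$ logical applications in total. Normality forbids consecutive $(\PERM)^*$ applications, so the number of permutations is bounded by the number of non-permutation rules, again $O(n^2)$. Adding in the $O(n)$ contractions yields the desired $O(n^2)$ bound. The main obstacle is making the bracket-counting argument fully rigorous: one must formally track each bracket as a conserved ``token'' through the derivation tree, verifying that non-bracket rules leave brackets intact, that the binary rules partition brackets between their two premises, and that each introduced bracket is paired with a unique eliminating rule further up the tree.
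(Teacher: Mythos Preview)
Your proposal is correct and follows essentially the same approach as the paper: both bound $(\CONTRb)$ applications by a bracket-conservation argument, then use the bracket non-negative condition to show that the relevant $\NMod$/$\PMod$ occurrences are never duplicated by contraction and hence trace injectively to goal subformulae, and finally bound logical rules by $O(n^2)$ via the connective budget and permutations via normality. The only cosmetic difference is that you read the derivation from root to leaves while the paper reads from leaves to root, and you should make explicit (as the paper does) that the number of axiom leaves is bounded by the number of branching rules plus one, since this is needed to cap the $(\PERM)^*$ count.
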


%FIXME: define size of a meta-sequent

\begin{proof}
Let us call $(\CONTRb)$ and $(\PERM)^*$ {\em structural}
rules, and all others {\em logical}.

First, we track all pairs of brackets that occur in this derivation.
Pairs of brackets are in one-to-one correspondence with applications of $(\NMod\to)$ or $(\to\PMod)$ rules that introduce them.
Then a pair of brackets either traces down to the goal sequent, or gets destroyed by an application of $(\PMod\to)$, $(\to\NMod)$,
or $(\CONTRb)$. Therefore, the total number of $(\CONTRb)$ applications is less or equal to the number of $(\NMod\to)$ and
$(\to\PMod)$ applications. Each $(\NMod\to)$ application introduces a negative occurrence of a $\NMod C$ formula; each $(\to\PMod)$ occurrence
introduces a positive occurrence of a $\PMod C$ formula. Due to the bracket non-negative condition these formulae are never contracted
({\em i.e.,} could not occur in a ${!}A$ to which $(\CONTRb)$ is applied),
and therefore they trace down to {\em distinct} subformula occurrences in the goal sequent. Hence, the total number of $(\CONTRb)$ applications is bounded
by the number of subformulae of a special kind in the goal sequent, in other words, it is bounded by the size of the sequent.

Second, we bound the number of logical rules applications. Each logical rule introduces exactly one connective occurrence. Such an occurrence
traces down either to a connective occurrence in the goal sequent, or to an application of $(\CONTRb)$ that merges this occurrence
with the corresponding occurrence in the other ${!}A$. If $n$ is the size of the goal sequent, then the first kind of occurrences
is bounded by $n$; for the second kind, notice that each application of $(\CONTRb)$ merges not more than $n$ occurrences (since
the size of the formula being contracted, ${!}A$, is bounded by $n$ due to the subformula property), and the total number of $(\CONTRb)$ applications is also
bounded by $n$. Thus, we get a quadratic bound for the number of logical rule applications.

Third, the derivation is a tree with binary branching, so the number of leafs (axioms instances) in this tree
is equal to the number of branching points plus one. Each branching point is an application of a logical rule
(namely, $(\BS\to)$, \mbox{$(\SL\to)$,} or $(\to\cdot)$). Hence, the number of axiom instances is  bounded quadratically.

Finally, the number of $(\PERM)^*$ applications is also quadratically bounded, since each application
of $(\PERM)^*$ in a normal proof is preceded by an application of another rule or by an axiom instance.
%.... FIXME check axiom leafs;
%, using the technique from~\cite{KanKuzSce2}. We consider blocks of
%consecutive permutations. 
%FIXME: could be exponential number of permutations! Generalized permutation rule! $(\PERM)^*$
%{\em Normalized derivation}: no two consecutive applications of $(\PERM)^*$
\end{proof}

\begin{proof}[of Theorem~\ref{Th:NP}]
The normal derivation of a sequent obeying the bracket non-negative condition is
an NP-witness for derivability: it is of polynomial size, and  correctness
is checked in linear time (w.r.t.\ the size of the derivation).
\end{proof}

For the case without brackets, $\CCbfp$, considered in our earlier paper~\cite{KanKuzSce2}, 
the NP-decidable fragment is substantially smaller. Namely, it includes
only sequents in which ${!}$ can be applied only to variables. Indeed, as soon as
we allow formulae of implication nesting depth at least 2 under ${!}$, the derivablity
problem for $\CCbfp$ becomes undecidable~\cite{KanKuzSce2}. In contrast to $\CCbfp$,
in $\CC$, due to the non-standard contraction rule,
 brackets control the number of $(\CONTRb)$ applications in the proof, and this allows
to construct an effective decision algorithm for derivability of a broad class of sequents, where,
for example, any formulae without bracket modalities can be used under ${!}$.  
%FIXME where any formulae without bracket modalities can be used under !
%arbitrary pure Lambek formulae (built with $\cdot$, $\BS$, and $\SL$) under ${!}$, 
%nested ${!}$, {\em etc.} 
Essentially, the only problematic situation, that gives rise to undecidability
(Theorem~\ref{Th:main}), is the construction where one forcedly removes the brackets that
appear in the $(\CONTRb)$ rule, {\em i.e.,} uses constructions like ${!} \NMod B$ (as in
our undecidability proof). The idea of the bracket non-negative condition is to rule out
such situations while keeping all other constructions allowed, as they don't violate
decidability~\cite{MorVal}. %As we show below, they derivability problem is even still in NP.

%... correctness of a derivation can be checked in linear time w.r.t.\ its size.
%=====
%However,
%in practice ${!}$ is usually applied only to variables. If we consider only
%sequents with this restriction, then, as we show in~\cite{KanKuzSce2},
%$\CCbfp$ becomes decidable and falls into the NP class.

%As for~$\CC$, Morrill and Valent\'{\i}n~\cite{MorVal} present an algorithm
%for the case when ${!}$ is applied only to formulae not containing bracket
%modalities ($\NMod$ and $\PMod$). Note that if we allow $\NMod$
%under ${!}$, then we immediately enjoy our undecidability proof.

\section{Conclusions and Future Work}\label{S:future}
%FIXME: discuss why SUBexponential
In this paper we study an extension of the Lambek calculus with subexponential and bracket modalities.
Bracket modalities were introduced by Morrill~\cite{Morrill1992} and Moortgat~\cite{Moortgat1996} in order to represent
the linguistic phenomenon of islands~\cite{Ross}. The interaction of subexponential and bracket
modalities was recently studied by Morrill and Valent\'{\i}n~\cite{MorVal} in order to represent
correctly the phenomenon of medial and parasitic extraction~\cite{Ross}\cite{Barry}. We prove that the calculus of
Morrill and Valent\'{\i}n is undecidable, thus solving a problem left open in~\cite{MorVal}. Morrill and Valent\'{\i}n also considered the so-called
bracket non-negative fragment of this calculus, for which they presented an exponential time
derivability decision procedure. We improve their result by showing that this problem is in NP.
%We also prove that the so-called bracket non-negative
%fragment of this calculus

Our undecidability proof is based on encoding semi-Thue systems by means of sequents that lie
just outside the bracket non-negative fragment. More precisely,
the formulae used in our encoding
are of the from ${!}\,\NMod A$, where $A$ is a pure Lambek formula of order 2. It remains for
further investigation whether these formulae could be simplified.

%FIXME: repeat of the delta + grey zone

Our undecidability proof could be potentially made stronger by restricting
the language. Now we use three connectives of the original Lambek calculus:
\mbox{$\SL$, $\cdot$, and $\U$,} plus $\NMod$ and ${!}$.
%In this undecidability proof we've used the following connectives from
%the original Lambek calculus: $\SL$, $\cdot$, and $\U$.
One could get rid of $\U$ by means of the substitution from~\cite{KuznUnit}.
%..... FIXME ....
Going further, one might also encode a more clever construction
by Buszkowski~\cite{Buszko} in order to restrict ourselves further
to the product-free one-division fragment. Finally, one could adopt substitutions
from~\cite{KanovichNeutrals} and obtain undecidability for the language with
only one variable.

There are also several other linguistically motivated extensions of the Lambek calculus
(see, for instance,~\cite{MorrillBook}\cite{MootRetore}\cite{MorrillPhilosophy}) %FIXME: other refs....
and their algorithmic and logical properties should be investigated.

%FIXME: move the rules from Sect. 4 to appendix
 
%However, this needs 
%further checking and a good presentation (since the bracket-restricted
%contraction rule doesn't allow putting the bang-formula to both branches
%of the derivation tree when a two-premise rule is applied).
%Product-free -> Buszko

%FIXME: comment on decidable cases (${!}p_i$ and Morrill's ``non-negative condition'' - where
%is it violated here?)

\newpage

\newpage

\section*{Appendix~I. Cut Elimination Proof for $\CC$}

%FIXME reformulated main Lemma
%\setcounter{lemma}{\ref{Lm:cutelim}}
\iffalse
\begin{lemma}
Let $\Delta \langle \Pi \rangle \to C$ be derived in the following way:
$$
\infer[(\CUT)]
{\Delta \langle \Pi \rangle \to C}
{\der{\Der_{\mathrm{left}}}{\Pi \to A} & \der{\Der_{\mathrm{right}}}{\Delta \langle A \rangle \to C}}
$$
where $\Der_{\mathrm{left}}$ and $\Der_{\mathrm{right}}$ are cut-free.
Then
$\Delta \langle \Pi \rangle \to C$ also has a cut-free derivation.
\end{lemma}
\fi

In this section we give a complete proof of Lemma~\ref{Lm:cutelim}, which is the main
step of cut elimination in $\CC$ (Theorem~\ref{Th:cutelim}).

\begin{proof}
%Let $\Bx_0 = \Bx(\Der_1) + \Bx(\Der_2)$ and 
Proceed by nested induction on two parameters:

\begin{enumerate}
\item The complexity $\kappa$ of the formula $A$ being cut.
%\item The {\em inverted} depth of $A$ in $\Delta$ w.r.t. brackets: the more
%nested brackets cover $A$, the {\em less} is this parameter. %Formally,
%This parameter is defined as follows: 
%$\delta = \Bx_0 - d(\Delta \langle A \rangle)$.
%(We maintain the fact that $\delta \ge 0$, since $\Bx_0$ is greater or equal than
%any bracket depth in our derivations.)
%let $d$ be the number of nested brackets
%that cover $A$ and let $D$ be the maximum of $d$ for all formulae 
%in all sequents of the original derivation (with cut). Then this
%parameter is $\delta = D - d$.
\item The total number $\sigma$ of rule applications in $\Der_{\mathrm{left}}$ and $\Der_{\mathrm{right}}$. %
%the cut-free derivations
%of the premises ($\Pi \to A$ and $\Delta\langle A \rangle \to C$).
\end{enumerate}

In each case either $\kappa$ gets reduced, or $\kappa$ remains the
same 
and $\sigma$ gets reduced.

{\bf Case 1 (axiomatic).} One of the premises of $(\CUT)$ is an axiom
of the form $A \to A$. Then the other premise coincides with the goal, and
cut disappears.

{\bf Case 2 (left non-principal).} 

{\it Subcase 2.a.} The last rule in $\Der_{\mathrm{left}}$ is one of the one-premise rules operating only on
the left-hand side of the sequent: $(\cdot\to)$, $(\U\to)$, $(\NMod\to)$, $(\PMod\to)$, $({!}\to)$,
$(\PERM_i)$, $(\CONTR)$. Denote this rule by $(R)$. Notice that $(R)$ can be applied in any context,
and transform the derivation in the following way:
$$
\infer[(\CUT)]{\Delta (\Pi) \to C}
{\infer[(R)]{\Pi \to A}{\Pi' \to A} & \Delta (A) \to C}
\qquad
\text{\raisebox{1em}{$\leadsto$}}
\qquad
\infer[(R)]{\Delta (\Pi) \to C}
{\infer[(\CUT)]{\Delta (\Pi') \to C}{\Pi' \to A & \Delta (A) \to C}}
$$

The $\sigma$ parameter gets reduced, therefore the new cut is eliminable by induction hypothesis.

{\it Subcase 2.b.} The last rule in $\Der_{\mathrm{left}}$ is %$\Pi \to A$ is obtained by 
$(\BS\to)$ or $(\SL\to)$. Then the derivation fragment

$$
\infer[(\CUT)]{\Delta (\Pi (\Pi', E \BS F)) \to C}
{\infer[(\BS\to)]{\Pi (\Pi', E \BS F) \to A}{\Pi' \to E & \Pi(F) \to A} & \Delta(A) \to C}
$$
is transformed into 
$$
\infer[(\BS\to)]{\Delta (\Pi (\Pi', E \BS F)) \to C}
{\Pi' \to E & \infer[(\CUT)]{\Delta (\Pi (F)) \to C}{\Pi(F) \rangle \to A & \Delta(A) \to C}}
$$

Again, $\sigma$ decreases. The $(\SL\to)$ case is handled symmetrically.

%+ $(\SL\to)$, $(\BS\to)$

{\bf Case 3 (deep).}
The last rule applied on the left is $(\to{!})$. Then the cut rule application has the following form:
$$
\infer[(\CUT)]{\Delta \langle {!}\Pi \rangle \to C}{
\infer[(\to{!})]{{!}\Pi \to {!}A}{{!}\Pi \to A} & \Delta \langle {!}A \rangle \to C}
$$
The right premise, $\Delta \langle {!}A \rangle \to C$, has a cut-free derivation tree $\Der_{\mathrm{right}}$.
Let us trace the designated occurrence of ${!}A$ in $\Der_{\mathrm{right}}$. The trace can branch if
$(\CONTR)$ is applied to this formula. Each branch of the trace ends either with an axiom (${!}A \to {!}A$) leaf or
with an application of $({!}\to)$ that introduces ${!}A$.

The axiom ${!}A \to {!}A$ can be reduced to $A \to A$ by consequent application of
$({!}\to)$ and $(\to{!})$. % in the following way:
%
%$(\to{!})$ on the left. Deep elimination. Trace ${!}A$ upto axioUndecidability of the Lambek calculus with subexponentials and bracket modalities.ms (${!}A \to {!}A$) or dereliction.
%The first case translates into the second one:
%
%$$
%\infer[(\to{!})]{{!}A \to {!}A}{\infer[({!}\to)]{{!}A \to A}{A\to A}}
%$$
Therefore, without loss of generality, we can assume that all branches lead to applications of
$({!}\to)$. The whole picture is shown on Figure~\ref{Fig:deepcut1}.
\begin{figure}
\includegraphics[scale=.7]{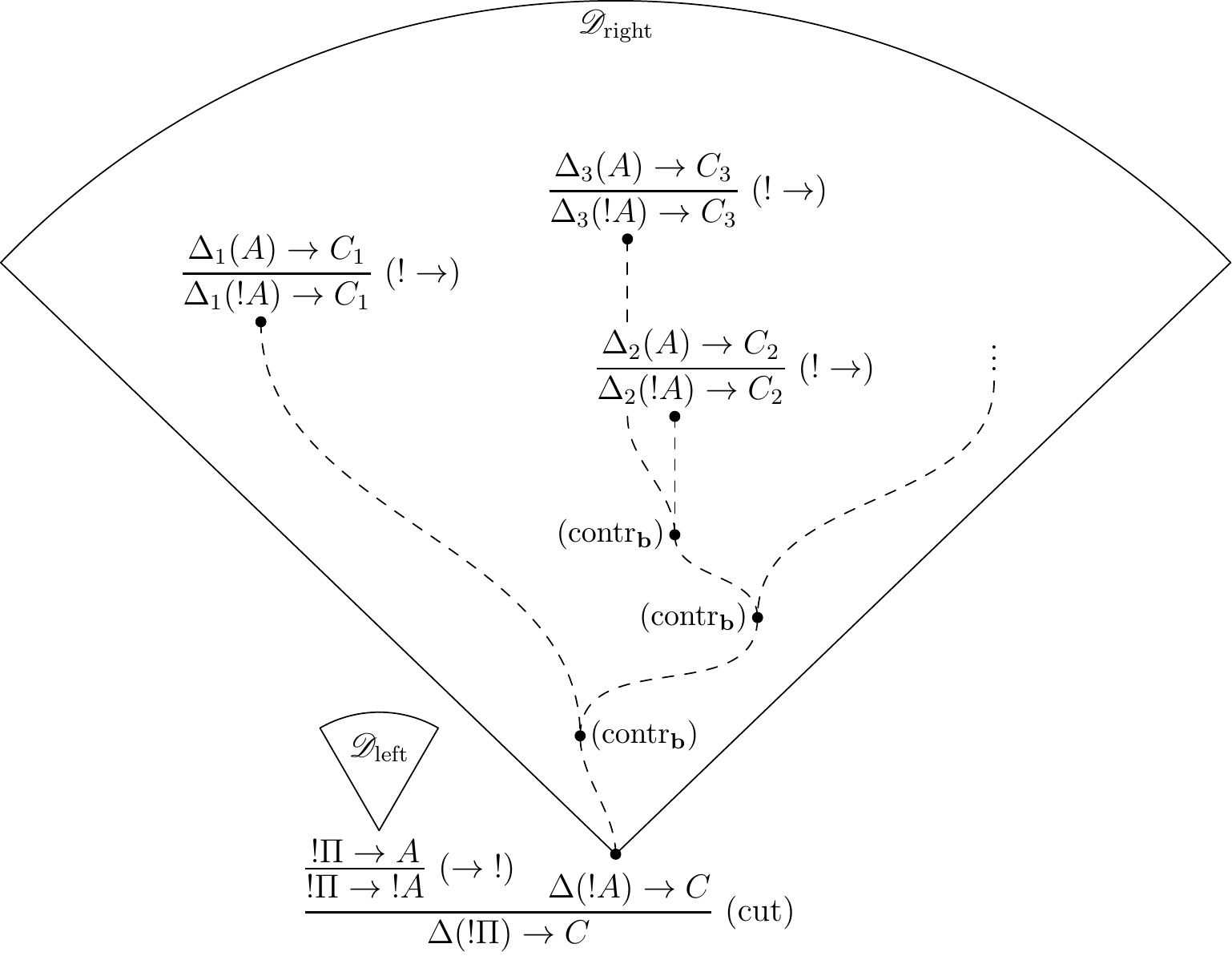}
\caption{} \label{Fig:deepcut1}
\end{figure}

In $\Der_{\mathrm{right}}$ we replace the designated occurrences of ${!}A$ with ${!}\Pi$ along the traces.
The applications of $(\CONTR)$ remain valid; if there were permutation rules applied,
we replace such a rule with a series of permutations for each formula in ${!}\Pi$. Other
rules do not operate ${!}A$ and therefore remain intact. After this replacement
applications of $({!}\to)$ tranform into applications of $(\CUT)$ with $\Pi \to A$ as the
left premise (Figure~\ref{Fig:deepcut2}).
One case could go through several instances
of $({!}\to)$ with the active ${!}A$, like $\Delta_2$ and $\Delta_3$ in the example; in this case
we go from top to bottom.
%Now we rebuild the derivation in the way shown on Figure~\ref{Fig:deepcut2}.
\begin{figure}
\includegraphics[scale=.7]{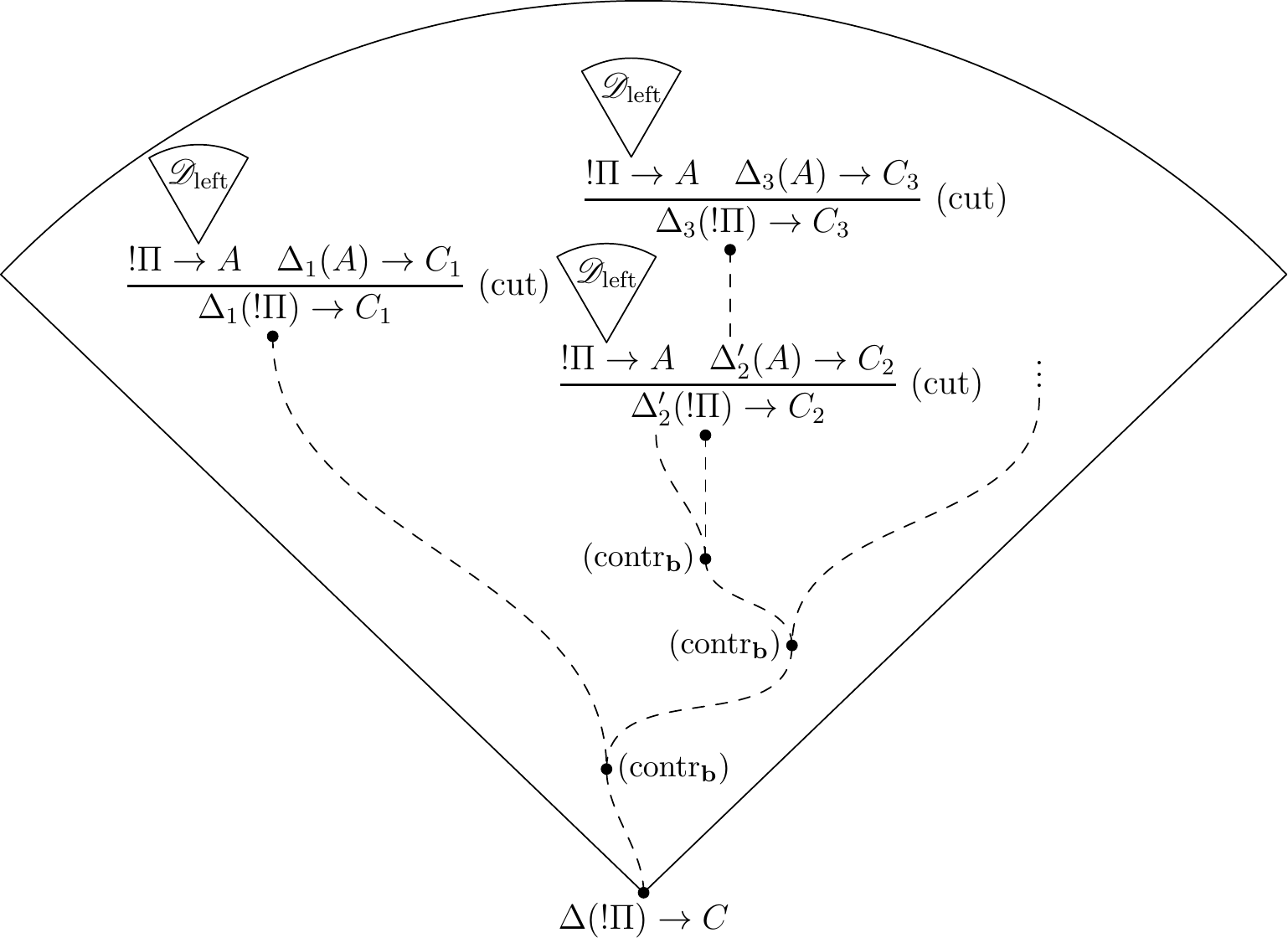}
\caption{} \label{Fig:deepcut2}
\end{figure}

%Then cut every $A$ (lower complexity!) \& propagate.

The new cuts have lower $\kappa$ (the cut formula is $A$ instead of ${!}A$), and therefore they
are eliminable by induction hypothesis.

{\bf Case 4 (principal).} In the so-called principal case, the last rules
both in $\Der_{\mathrm{left}}$ and in $\Der_{\mathrm{right}}$ introduce the
main connective of the formula $A$ being cut. Note that $A$ here is not
of the form ${!}A'$ (this is the previous case). In the principal case,
the $\kappa$ parameter gets reduced, and therefore the induction hypothesis
can be applied to eliminate the new cut(s) that arise after the transformation.

{\em Subcase 4.a: $(\to\BS)$ vs. $(\BS\to)$ or $(\to\SL)$ vs. $(\SL\to)$.}
In this case $A = A_1 \BS A_2$ (the $\SL$ case is handled symmetrically), and
the derivation fragment %), and
%the last rules in $\Der_{\mathrm{left}}$ and in $\Der_{\mathrm{right}}$ are
%$(\to\BS)$ and $(\BS\to)$ respectively, both intoducing the formula $A$ being cut:
$$
\infer[(\CUT)]{\Delta (\Phi, \Pi) \to C}
{\infer[(\to\BS)]{\Pi \to A_1 \BS A_2}{A_1, \Pi \to A_2} & 
\infer[(\BS\to)]{\Delta (\Phi, A_1 \BS A_2) \to C}
{\Phi \to A_1 & \Delta (A_2) \to C}}
$$
transforms into
$$
%\qquad\leadsto\qquad
\infer[(\CUT)]{\Delta (\Phi, \Pi) \to C}
{\Phi \to A_1 & \infer[(\CUT)]{\Delta (A_1, \Pi) \to C}
{A_1, \Pi \to A_2 & \Delta (A_2) \to C}}
$$

%Both cuts have a smaller $\kappa$ parameter, and therefore they are
%eliminable by induction hypothesis.

{\em Subcase 4.b. $(\to\cdot)$ vs. $(\cdot\to)$.}
In this case $A = A_1 \cdot A_2$, and the derivation fragment
%In this case the last rules in $\Der_\mathrm{left}$ and $\Der_{\mathrm{right}}$ are
%$(\to\cdot)$ and $(\cdot\to)$ respectively, introducing the formula $A = A_1 \cdot A_2$
%being cut:
$$
\infer[(\CUT)]{\Delta (\Pi_1, \Pi_2) \to C}
{\infer[(\to\cdot)]{\Pi_1, \Pi_2 \to A_1 \cdot A_2}{\Pi_1 \to A_1 & \Pi_2 \to A_2} &
\infer[(\cdot\to)]{\Delta (A_1 \cdot A_2) \to C}
{\Delta (A_1, A_2) \to C}}
$$
transforms into
$$
%\qquad\leadsto\qquad
\infer[(\CUT)]{\Delta (\Pi_1, \Pi_2) \to C}
{\Pi_2 \to A_2 & \infer[(\CUT)]{\Delta (\Pi_1, A_2) \to C}
{\Pi_1 \to A_1 & \Delta (A_1, A_2) \to C}}
$$

{\em Subcase 4.c. $(\to\U)$ vs. $(\U\to)$.}
In this case $A = \U$:
$$
\infer[(\CUT)]{\Delta (\Lambda) \to C}
{\infer[(\to\U)]{\Lambda \to \U}{} & \infer[(\U\to)]{\Delta (\U) \to C}
{\Delta (\Lambda) \to C}}
$$
The cut disappears, since its goal coincides with the premise of $(\U\to)$.

{\em Subcase 4.d. $(\to\NMod)$ vs. $(\NMod\to)$.}
In this case $A = \NMod A'$, and the derivation fragment
$$
\infer[(\CUT)]{\Delta ([\Pi]) \to C}
{\infer[(\to\NMod)]{\Pi \to \NMod A'}{[\Pi] \to A'} &
\infer[(\NMod\to)]{\Delta ([\NMod A']) \to C}
{\Delta (A') \to C}}
$$
transforms into
$$
%\qquad\leadsto\qquad
\infer[(\CUT)]{\Delta ([\Pi]) \to C}
{[\Pi] \to A' & \Delta (A') \to C}
$$

{\em Subcase 4.e. $(\to\PMod)$ vs. $(\PMod\to)$.}
In this case $A = \PMod A'$, and the derivation fragment
$$
\infer[(\CUT)]{\Delta ([\Pi]) \to C}
{\infer[(\to\PMod)]{[\Pi] \to \PMod A'}{\Pi \to A'} &
\infer[(\PMod\to)]{\Delta (\PMod A' \to C}
{\Delta ([A']) \to C}}
$$
transforms into
$$
%\qquad\leadsto\qquad
\infer[(\CUT)]{\Delta ([\Pi]) \to C}
{\Pi \to A' & \Delta ([A']) \to C}
$$

{\bf Case 5 (right non-principal).} In the remaining cases, $A$ is not of the form
${!}A'$ (therefore the last rule of $\Der_{\mathrm{right}}$ is not $(\to{!})$;
it is also not $(\to\U)$, since there is nothing to cut in an empty antecedent)
and the last rule of $\Der_{\mathrm{right}}$ does not operate on $A$. 
%Generally,
%in all subcases of 
In this case, the cut gets propagated upwards to $\Der_{\mathrm{right}}$, decreasing
$\sigma$ with the same $\kappa$.
\end{proof}

%Theorem~\ref{Th:cutelim} easily follows from this lemma.

\section*{Appendix II. Axioms and Rules of $\CCbfp$}

%Its axioms 
%are $A \to A$ and $\Lambda \to \U$, and rules of inference are as follows:
%$$
%\infer{A \to A}{}
%\qquad
%\qquad
%\infer[(\to\U)]{\Lambda \to \U}{}
%$$
$$
\infer{A \to A}{}
\qquad
\infer{\Lambda \to \U}{}
$$
$$
\infer[(\SL\to)]{\Delta_1, C \SL B, \Gamma, \Delta_2 \to D}
{\Gamma \to B & \Delta_1, C, \Delta_2 \to D}
\quad
\infer[(\to\SL)]{\Gamma \to C \SL B}{\Gamma, B \to C}
\quad
\infer[(\cdot\to)]{\Delta_1, A \cdot B, \Delta_2 \to D}
{\Delta_1, A, B, \Delta_2 \to D}
$$
$$
\infer[(\BS\to)]{\Delta_1, \Gamma, A \BS C, \Delta_2 \to D}
{\Gamma \to A & \Delta_1, C, \Delta_2 \to D}
\quad
\infer[(\to\BS)]{\Gamma \to A \BS C}{A, \Gamma \to C}
\quad
\infer[(\to\cdot)]
{\Gamma_1, \Gamma_2 \to A \cdot B}
{\Gamma_1 \to A & \Gamma_2 \to B}
$$
$$
\infer[({!}\to)]{\Gamma_1, {!}A, \Gamma_2 \to B}{\Gamma_1, A, \Gamma_2 \to B}
\quad
\infer[(\PERM_1)]{\Delta_1, \Gamma, {!}A, \Delta_2 \to B}
{\Delta_1, {!}A, \Gamma, \Delta_2 \to B}
\quad
\infer[(\PERM_2)]{\Delta_1, {!}A, \Gamma, \Delta_2 \to B}
{\Delta_1, \Gamma, {!}A, \Delta_2 \to B}
$$
$$
\infer[(\U\to)]{\Delta_1, \U, \Delta_2 \to A}{\Delta_1, \Delta_2 \to A}
\quad
\infer[(\to{!})]{{!}A_1, \dots, {!}A_n \to {!}A}
{{!}A_1, \dots, {!}A_n \to A}
\quad
\infer[(\CONTR)]{\Delta_1, {!}A, \Delta_2 \to B}
{\Delta_1, {!}A, {!}A, \Delta_2 \to B}
$$
$$
\infer[(\CUT)]{\Delta_1, \Pi, \Delta_2 \to C}{\Pi \to A & \Delta_1, A, \Delta_2 \to C}
$$

\iffalse
\section*{Acknowledgments}

Stepan Kuznetsov's research was supported by the Russian Foundation
for Basic Research 
%(grants 15-01-09218-a and 14-01-00127-a)
and by the Presidential Council for Support of Leading
Scientific Schools.
%(grant N\v{S}-9091.2016.1). 
Max Kanovich's research was partially supported
by EPSRC. Andre Scedrov's research was partially supported by ONR.

This research was performed in part during visits of Stepan Kuznetsov and Max Kanovich
to the University of Pennsylvania. We greatly appreciate support of the
Mathematics Department of the University.
A part of the work was also done during the stay of Andre Scedrov at
the National Research University Higher School of Economics.
 We would like to thank 
S.~O.~Kuznetsov and I.~A.~Makarov for hosting there.

The paper was prepared in part within the framework of the Basic
Research Program at the National Research University Higher
School of Economics (HSE) and was partially supported within the
framework of a subsidy by the Russian Academic Excellence Project
`5--100'. 
\fi

\end{document}